\DeclareMathAlphabet{\mathpzc}{OT1}{pzc}{m}{it} 
\newtheorem{Thm}{Theorem}[section]
\newtheorem{Lem}{Lemma}[section]
\newtheorem{Prop}{Proposition}[section]
\newtheorem{Def}{Definition}[section]
\theoremstyle{definition}
\newtheorem{Rem}{Remark}[section]
\theoremstyle{definition}
\newcommand\setmeno{\!\smallsetminus\!} 
\newcommand\function{\longrightarrow} 
\newcommand\indicator{\mathds{1}} 
 \newcommand\en{\mathbb{N}} 
\newcommand\ar{\mathbb{R}} 
\newcommand{\eps}{\varepsilon} 
\providecommand{\clint}[1]{\hspace{0.045ex}\left[#1\right]} 
\providecommand{\clsxint}[1]{\hspace{0.1ex}\left[#1\right[\hspace{0.15ex}} 
\providecommand{\opint}[1]{\hspace{0.15ex}\left]#1\right[\hspace{0.15ex}} 
\DeclareMathOperator{\Id}{{\textsl{Id}}} 
\newcommand{\borel}{\mathscr{B}} 
\renewcommand{\L}{{\textsl{L}\hspace{0.17ex}}} 
\newcommand\leb{\mathpzc{L}} 
\DeclareMathOperator{\de}{d \! \hspace{0.2ex}} 
\newcommand{\Step}{{\textsl{St}\hspace{0.17ex}}} 
\newcommand\X{\textsl{X}\hspace{0.21ex}} 
\renewcommand\d{\textsl{d}} 
\newcommand\As{\textsl{A}\hspace{0.21ex}} 
\DeclareMathOperator{\Int}{int} 
\DeclareMathOperator{\Cl}{cl} 
\newcommand\Y{\textsl{Y}\hspace{0.21ex}} 
\renewcommand\S{\textsl{S}\hspace{0.21ex}} 
\DeclareMathOperator{\cont}{Cont}  
\DeclareMathOperator{\discont}{Discont}  
\newcommand{\Czero}{{\textsl{C}\hspace{0.18ex}}} 
\DeclareMathOperator{\Lipcost}{Lip} 
\newcommand{\Lip}{{\textsl{Lip}\hspace{0.15ex}}} 
\newcommand\E{\textsl{E}\hspace{0.21ex}} 
\newcommand\norm[2]{\Vert #1\Vert_{#2}} 
\newcommand{\convergedeb}{\rightharpoonup} 
\renewcommand\H{\mathcal{H}} 
\newcommand\duality[2]{\langle #1,#2 \rangle} 
\newcommand{\Conv}{\mathscr{C}} 
\newcommand\K{\mathcal{K}} 
\DeclareMathOperator{\Proj}{Proj} 
\DeclareMathOperator{\seg}{seg}
\newcommand\hausd{\mathpzc{H}} 
\newcommand\A{\mathcal{A}} 
\newcommand\B{\mathcal{B}} 
\newcommand\C{\mathcal{C}} 
\newcommand\Z{\mathcal{Z}} 
\newcommand{\BV}{{\textsl{BV}\hspace{0.17ex}}} 
\DeclareMathOperator{\pV}{V} 
\renewcommand\r{\textsl{r}} 
\newcommand\vartot[1]{\!\left\bracevert\! #1 \!\right\bracevert\!} 
\DeclareMathOperator{\D}{D\!} 
\newcommand{\ftilde}{\widetilde{f}} 
\newcommand{\utilde}{\widetilde{u}} 
\renewcommand{\P}{{\mathsf{P}}} 
\newcommand{\Pbar}{\overline{\P}} 
\renewcommand\sp{\hspace{3.1ex}} 
\providecommand{\cldxint}[1]{\hspace{0.15ex}\left]#1\right]} 
\renewcommand\l{\textsl{l}} 
\newcommand\lduality[2]{\left\langle #1,#2 \right\rangle} 
\definecolor{blu}{rgb}{0.1,0.1,1}
\definecolor{green}{rgb}{0.0, 0.5, 0.0}
\definecolor{marr}{rgb}{0.63, 0.47, 0.35}
\begin{document}


\title[Sweeping processes]{Sweeping processes with prescribed behaviour on jumps}

\author{Vincenzo Recupero and Filippo Santambrogio}
\thanks{Supported by the INdAM - GNAMPA Project 2016 
``Sweeping processes: teoria, controllo e applicazioni a problemi rate independent''.
}

\address{\textbf{Vincenzo Recupero}\\
        Dipartimento di Scienze Matematiche\\ 
        Politecnico di Torino\\
        Corso Duca degli Abruzzi 24\\ 
        I-10129 Torino\\ 
        Italy. \newline
        {\rm E-mail address:}
        {\tt vincenzo.recupero@polito.it}}

\address{\textbf{Filippo Santambrogio}\\
        Laboratoire de Math\'ematiques d'Orsay\\ 
        Univ. Paris-Sud, CNRS, Universit\'e Paris-Saclay,
        F-91405 Orsay cedex,\\ 
        France. \newline
        {\rm E-mail address:}
        {\tt filippo.santambrogio@math.u-psud.fr}}

\subjclass[2010]{34A60, 49J52, 34G25, 47J20}
\keywords{Sweeping processes, Evolution variational inequalities, Play operator, Convex sets, Functions of bounded variation}



\begin{abstract}
We present a generalized formulation of sweeping process where the behaviour of the solution is prescribed at the jump 
points of the driving moving set. An existence and uniqueness theorem for such formulation is proved. As a consequence we derive a formulation and an existence/uniqueness theorem for sweeping processes driven by an arbitrary $\BV$ moving set, whose evolution is not necessarily right continuous. Applications to the play operator of elastoplasticity are also shown.
\end{abstract}


\maketitle


\thispagestyle{empty}


\section{Introduction}

Sweeping processes are a class of evolution differential inclusions introduced by J.J. Moreau in a series of articles 
\cite{Mor71, Mor72, Mor73, Mor74} which culminated in the celebrated paper \cite{Mor77}, originating a research that is still active. The original formulation introduced in \cite{Mor71, Mor74} reads as follows. Let $\H$ be a real Hilbert space, $a, b \in \ar$, $a < b$, and, for every time $t \in \clint{a,b}$, let $\C(t)$ be a given nonempty, closed and convex subset of $\H$ such that the mapping $t \longmapsto \C(t)$ is Lipschitz continuous when the family of closed subsets of $\H$ is endowed with the Hausdorff metric. One has to find a  Lipschitz continuous function $y : \clint{a,b} \function \H$ such that
\begin{alignat}{3}
  & y(t) \in \C(t) & \quad & \forall t \in \clint{a,b}, \label{y in C - Lip - intro} \\
  & -y'(t) \in N_{\C(t)}(y(t)) & \quad & \text{for $\leb^{1}$-a.e. $t \in \clint{a,b}$}, \label{diff. incl. - Lip - intro} \\
  & y(0) = y_{0}, &  \label{in. cond. - Lip - intro}
\end{alignat}
$y_0$ being a prescribed point in $\C(a)$. Here $\leb^1$ is the Lebesgue measure, and $N_{\C(t)}$ is the exterior normal cone to $\C(t)$ at $y(t)$ (all the precise definitions will be given in Section \ref{S:Preliminaries}). When the interior of $\C(t)$ is nonempty, the process defined by \eqref{y in C - Lip - intro}-\eqref{in. cond. - Lip - intro} has a nice and useful geometrical-mechanical interpretation which we recall from \cite{Mor77}: ``the moving point $u(t)$ remains at rest as long as it happens to lie in the interior of $\C(t)$; when caught up with the boundary of the moving set, it can only proceed in an inward normal direction, as if pushed by this boundary, so as to go on belonging to $\C(t)$''. Moreau was originally motivated by plasticity and friction dynamics (cf. \cite{Mor74, Mor76b, Mor02}), but now sweeping processes have found applications to nonsmooth mechanics (see, e.g., \cite{Mon93, KunMon00, Pao16}), to economics (cf., e.g., \cite{Hen73, Cor83, FlaHirJou09}), to electrical circuits (see, e.g., 
\cite{AddAdlBroGoe07, BroThi10, AcaBonBro11, AdlHadThi14}), to crowd motion modeling (cf., e.g., 
\cite{MauVen08, MauRouSan10, MauVen11, MauRouSanVen11, MauRouSan14, DimMauSan16}), and to other fields (see, e.g., the references in the recent paper \cite{Thi16}).

In \cite{Mor77} the formulation \eqref{y in C - Lip - intro}-\eqref{in. cond. - Lip - intro} is extended to the case when the mapping $t \longmapsto \C(t)$ is of bounded variation and right continuous in the following natural way: it is proved that there is a unique $y \in \BV^\r(\clint{a,b};\H)$, the space of right continuous $\H$-valued functions of bounded variation, such that there exist a positive measure $\mu$ and a $\mu$-integrable function $v : \clint{a,b} \function \H$ satisfying
\begin{alignat}{3}
  & y(t) \in \C(t) & \quad &  \forall t \in \clint{a,b}, \label{y in C - BV - intro} \\
  & \! \D y = v \mu, & \label{Dy = w mu - intro} \\
  & -v(t) \in N_{\C(t)}(y(t)) & \quad & \text{for $\mu$-a.e. $t \in \clint{a,b}$}, \label{diff. incl. - BV - intro} \\
  & y(a) = y_{0}, &  \label{in. cond. - BV - intro}
\end{alignat}
where $\D y$ denotes distributional derivative of $y$, which is a measure since $y\in BV$. A relevant particular case is provided by the case when $\C(t) = u(t) - \Z$, with $u \in \BV^\r(\clint{0,T};\H)$, and $\Z \subseteq \H$ closed, convex and nonempty, namely the sweeping process driven by a set with constant shape. The resulting solution operator 
$\P : \BV^\r(\clint{a,b};\H) \function \BV^\r(\clint{a,b};\H)$ associating with $u$ the unique function $y$ satisfying 
\eqref{y in C - BV - intro}-\eqref{in. cond. - BV - intro} with $\C(t) = u(t) - \Z$ is called \emph{vector play operator} 
(see, e.g., \cite{Kre91, Kre97, KreLau02, Rec08, Rec11, Rec15a}) and has an important role in elasto-plasticity and hysteresis (cf., e.g., \cite{KraPok89, Vis94, BroSpr96, Kre97, Mie05, MieRou15}). 

The theoretical analysis of problem \eqref{y in C - BV - intro}-\eqref{in. cond. - BV - intro} has been expanded in various directions: the case of $\C$ continuous was first dealt in \cite{Mon93}, the nonconvex case has been studied in several papers, e.g. 
\cite{Val88, Val90, Val92, CasDucVal93, CasMon96, ColGon99, Ben00, ColMon03, Thi03, BouThi05, EdmThi06, Thi08, HasJouThi08, BerVen10, SenThi14};  
for stochastic versions see, e.g., \cite{Cas73, Cas76, BerVen11, CasMonRay16}, while periodic solutions can be found in \cite{CasMon95}. The continuous dependence properties of various sweeping problems are investigated, e.g., in \cite{Mor77, BroKreSch04, Rec09, KreRoc11, Rec11, Rec11b, KleRec16, KopRec16}, and the control problems are
studied, e.g., in \cite{ColHenHoaMor15, ColHenNguMor16, ColPal16}.

When the moving set $\C$ jumps, the geometrical interpretation of the sweeping process 
\eqref{y in C - BV - intro}-\eqref{in. cond. - BV - intro} has to be revisited by analyzing the behaviour of the solution $y$ at 
jump points $t$ of $\C$: at such points it can be showed (cf. \cite{Mor77}) that $y(t) = y(t+) = \Proj_{\C(t)}(u(t-))$, where 
$\Proj$ is the classical projection operator, thus $y$ instantaneously moves from $\C(t-)$ to $\C(t) = \C(t+)$ along the shortest path which allows to satisfy the constraint \eqref{y in C - BV - intro}. Although this is a very natural requirement of formulation \eqref{y in C - BV - intro}-\eqref{in. cond. - BV - intro}, this is not the only one: let us see, for instance, two cases where a different behaviour can be prescribed at the jump points of $\C$.

First let us consider the problem of extending the formulation \eqref{y in C - BV - intro}-\eqref{in. cond. - BV - intro} to the case of arbitrary moving sets $\C$ of bounded variation, not necessarily right continuous. Of course one could replace 
$\C(t)$ by $\C(t+)$ in the differential inclusions \eqref{diff. incl. - BV - intro}, but this would not take into account of the position of $\C(t)$ at a jump point $t$: it would be very natural, instead, to expect that for such $t$ one should find 
$y(t+) = \Proj_{\C(t+)}(\Proj_{\C(t)}(y(t-)))$. 

Another situation is given by the play operator $\P$: when a jump point $t$ of the ``input function'' 
$u \in \BV^\r(\clint{a,b};\H)$ occurs, it would be very natural to expect that $y(t+) $ would be the final point of the play operator driven by the segment $(1-\sigma)u(t-) + \sigma u(+)$, $\sigma \in \clint{0,1}$, in other words we would expect that $y$ behaves as if $u$ traversed the segment joining $u(t-)$ and $u(t+)$ with ``infinite velocity'' at every jump points $t$. Nevertheless it can be proved (cf. \cite[Section 5.3]{Rec11} and \cite{KreRec14}) that in general the solution 
$y = \P(u)$ provided by \eqref{y in C - BV - intro}-\eqref{in. cond. - BV - intro} does not satisfy this property, which would be also natural if we recall that the play operator is \emph{rate independent}, i.e. $\P(u \circ \psi) = \P(u) \circ \psi$ for every increasing surjective reparametrization of time $\psi : \clint{a,b} \function \clint{a,b}$.

The aim of this paper is to provide a general formulation which take into account of all these possible different behaviors at jumps point of the driving moving set $\C$. To be more precise if $t \longmapsto \C(t)$ is right continuous of bounded variation, we prove that there exists a unique function $y \in \BV^\r(\clint{a,b};\H)$ such that $y(t) \in \C(t) = \C(t+)$ for every $t \in \clint{a,b}$ and 
\begin{alignat}{3}
  & \! \D y = v \mu, & \label{Dy = w mu - intro - gen} \\
  & v(t) + N_{\C(t)}(y(t)) \ni 0 & \quad & \text{for $\mu$-a.e. $t \in \clint{a,b}$}, \label{diff. incl. - BV - intro - gen} \\
  & y(t+) = g_t(y(t-)) & \quad & \text{for every discontinuity point $t$ of $\C$}, \label{presc beh - intro} \\
  & y(0) = y_{0}, &  \label{in. cond. - BV - intro - gen}
\end{alignat}
where $g_t : \C(t-) \function \C(t)$ is a family of functions prescribing the behaviour of $y$ at every jump point $t$ of 
$\C$ (a sort of family of ``initial conditions at the jump points of the datum'', but we actually consider a more general situation). The case of the arbitrary moving set $\C$, not necessarily continuous, can be immediately deduced by taking $g_t = \Proj_{\C(t+)} \circ \Proj_{\C(t)}$, the ``double projection''. 

The paper is organized as follows. In the next section we present some technical preliminaries and in Section \ref{S:main results} we state our main result. This result will be proved in Section \ref{S:proofs} and finally in the last Section \ref{S:applications} we present some applications and consequences of our main results.


\section{Preliminaries}\label{S:Preliminaries}

In this section we recall the main definitions and tools needed in the paper. The set of integers greater than or equal to 
$1$ will be denoted by $\en$. Given an interval $I$ of the real line $\ar$, if $\borel(I)$ indicates the family of Borel sets in 
$I$, $\mu : \borel(I) \function \clint{0,\infty}$ is a measure, $p \in \clint{1,\infty}$, and $\textsl{E}$ is a Banach space, then 
the space of $\E$-valued functions which are $p$-integrable with respect to $\mu$ will be denoted by $\L^p(I, \mu; \E)$ or simply by $\L^p(\mu; \E)$. We do not identify two functions which are equal $\mu$-almost everywhere. The one 
dimensional Lebesgue measure is denoted by $\leb^1$ and the Dirac delta at a point $t \in \ar$ is denoted by 
$\delta_t$. For the theory of integration of vector valued functions we refer, e.g., to \cite[Chapter VI]{Lan93}.

\subsection{Functions with values in a metric space}

In this subsection we assume that 
\begin{equation}\label{X complete metric space}
  \text{$(\X,\d)$ is a complete metric space},
\end{equation}
where we admit that $\d$ is an extended metric, i.e. $\X$ is a set and $\d : \X \times \X \function \clint{0,\infty}$ satisfies 
the usual axioms of a distance, but may take on the value $\infty$. The notion of completeness remains unchanged. The general topological notions of interior, closure and boundary of a subset $\As \subseteq \X$ will be respectively denoted by $\Int(\As)$, $\Cl(\As)$ and $\partial \As$. We also set $\d(x,\As) := \inf_{a \in \As} \d(x,a)$. If $(\Y,\d_\Y)$ is a metric space then the continuity set of a function $f : \Y \function \X$ is denoted by $\cont(f)$, while
$\discont(f) := T \setmeno \cont(f)$. The set of continuous $\X$-valued functions defined on $\Y$ is denoted by 
$\Czero(\Y;\X)$. For $\S \subseteq \Y$ we write $\Lipcost(f,\S) := \sup\{\d(f(s),f(t))/\d_\Y(t,s)\ :\ s, t \in \S,\ s \neq t\}$, 
$\Lipcost(f) := \Lipcost(f,\Y)$, the Lipschitz constant of $f$, and 
$\Lip(\Y;\X) := \{f : \Y \function \X\ :\ \Lipcost(f) < \infty\}$, the set of $\X$-valued Lipschitz continuous functions on 
$\Y$. If $\E$ is a Banach space with norm $\norm{\cdot}{\E}$ and $\S \subseteq \Y$, we set 
$\norm{f}{\infty,\S} := \sup_{t \in \S}\norm{f(t)}{\E}$ for every function $f : \Y \function \E$ We recall now the notion of 
$\BV$ function with values in a metric space (see, e.g., \cite{Amb90, Zie94}).

\begin{Def}
Given an interval $I \subseteq \ar$, a function $f : I \function \X$, and a subinterval $J \subseteq I$, the 
\emph{(pointwise) variation of $f$ on $J$} is defined by
\begin{equation}\notag
  \pV(f,J) := 
  \sup\left\{
           \sum_{j=1}^{m} \d(f(t_{j-1}),f(t_{j}))\ :\ m \in \en,\ t_{j} \in J\ \forall j,\ t_{0} < \cdots < t_{m} 
         \right\}.
\end{equation}
If $\pV(f,I) < \infty$ we say that \emph{$f$ is of bounded variation on $I$} and we set 
$\BV(I;\X) := \{f : I \function \X\ :\ \pV(f,I) < \infty\}$.
\end{Def}

It is well known that the completeness of $\X$ implies that every $f \in \BV(I;\X)$ admits one-sided limits $f(t-), f(t+)$ at every point $t \in I$, with the convention that $f(\inf I-) := f(\inf I)$ if $\inf I \in I$, and $f(\sup I+) := f(\sup I)$ if 
$\sup I \in I$, and that $\discont(f)$ is at most countable. We set 
$\BV^\r(I;\X) := \{f \in \BV(I;\X)\ :\ f(t) = f(t+) \quad \forall t \in I\}$ and if $I$ is bounded we have 
$\Lip(I;\X) \subseteq \BV(I;\X)$.

\subsection{Convex sets in Hilbert spaces}

Throughout the remainder of the paper we assume that
\begin{equation}\label{H-prel}
\begin{cases}
  \text{$\H$ is a real Hilbert space with inner product $(x,y) \longmapsto \duality{x}{y}$} \\
  \norm{x}{} := \duality{x}{x}^{1/2}
\end{cases},
\end{equation}
and we endow $\H$ with the natural metric defined by $\d(x,y) := \norm{x-y}{}$, $x, y \in \H$. We set
\begin{equation}\notag
  \Conv_\H := \{\K \subseteq \H\ :\ \K \ \text{nonempty, closed and convex} \}.
\end{equation}
If  $\K \in \Conv_\H$ and $x \in \H$, then $\Proj_{\K}(x)$ is the projection on $\K$, i.e. $y = \Proj_\K(x)$ is the unique 
point such that $d(x,\K) = \norm{x-y}{}$, and it is also characterized by the two conditions
\begin{equation}\notag
  y \in \K, \quad \duality{x - y}{v - y} \le 0 \qquad \forall v \in \K.
\end{equation}
If $\K \in \Conv_\H$ and $x \in \K$, then $N_\K(x)$ denotes the \emph{(exterior) normal cone of $\K$ at $x$}:
\begin{equation}\label{normal cone}
  N_\K(x) := \{u \in \H\ :\ \duality{u}{v - x} \le 0\ \forall v \in \K\} = \Proj_\K^{-1}(x) - x.
\end{equation}
It is well known that the multivalued mapping $x \longmapsto N_\K(x)$ is \emph{monotone}, i.e. 
$\duality{u_1 - u_2}{x_1 - x_2} \ge 0$ whenever $x_j \in \K$, $u_j \in N_\K(x_j)$, $j = 1, 2$ (see, e.g., 
\cite[Exemple 2.8.2, p.46]{Bre73}). We endow the set $\Conv_{\H}$ with the Hausdorff distance. Here we recall the definition.

\begin{Def}
The \emph{Hausdorff distance} $\d_\hausd : \Conv_{\H} \times \Conv_{\H} \function \clint{0,\infty}$ is defined by
\begin{equation}\notag
  \d_{\hausd}(\A,\B) := 
  \max\left\{\sup_{a \in \A} \d(a,\B)\,,\, \sup_{b \in \B} \d(b,\A)\right\}, \qquad
  \A, \B \in \Conv_{\H}.
\end{equation}
\end{Def}
The metric space $(\Conv_\H, \d_\hausd)$ is complete (cf. \cite[Theorem II-14, Section II.3.14, p. 47]{CasVal77}).


\subsection{Differential measures}

We recall that a \emph{$\H$-valued measure on $I$} is a map $\mu : \borel(I) \function \H$ such that 
$\mu(\bigcup_{n=1}^{\infty} B_{n})$ $=$ $\sum_{n = 1}^{\infty} \mu(B_{n})$ whenever $(B_{n})$ is a sequence of mutually disjoint sets in $\borel(I)$. The \emph{total variation of $\mu$} is the positive measure 
$\vartot{\mu} : \borel(I) \function \clint{0,\infty}$ defined by
\begin{align}\label{tot var measure}
  \vartot{\mu}(B)
  := \sup\left\{\sum_{n = 1}^{\infty} \norm{\mu(B_{n})}{}\ :\ 
                 B = \bigcup_{n=1}^{\infty} B_{n},\ B_{n} \in \borel(I),\ 
                 B_{h} \cap B_{k} = \varnothing \text{ if } h \neq k\right\}. \notag
\end{align}
The vector measure $\mu$ is said to be \emph{with bounded variation} if $\vartot{\mu}(I) < \infty$. In this case the equality $\norm{\mu}{} := \vartot{\mu}(I)$ defines a norm on the space of measures with bounded variation (see, e.g. 
\cite[Chapter I, Section  3]{Din67}). 

If $\nu : \borel(I) \function \clint{0,\infty}$ is a positive bounded Borel measure and if $g \in \L^1(I,\nu;\H)$, then $g\nu$ will denote the vector measure defined by $g\nu(B) := \int_B g\de \nu$ for every $B \in \borel(I)$. In this case 
$\vartot{g\nu}(B) = \int_B \norm{g(t)}{}\de \nu$ for every $B \in \borel(I)$ (see \cite[Proposition 10, p. 174]{Din67}). Moreover a vector measure $\mu$ is called \emph{$\nu$-absolutely continuous} if $\mu(B) = 0$ whenever 
$B \in \borel(I)$ and $\nu(B) = 0$. 

Assume that $\mu : \borel(I) \function \H$ is a vector measure with bounded variation and let $f : I \function \H$ and 
$\phi : I \function \ar$ be two \emph{step maps with respect to $\mu$}, i.e. there exist $f_{1}, \ldots, f_{m} \in \H$, 
$\phi_{1}, \ldots, \phi_{m} \in \H$ and $A_{1}, \ldots, A_{m} \in \borel(I)$ mutually disjoint such that 
$\vartot{\mu}(A_{j}) < \infty$ for every $j$ and $f = \sum_{j=1}^{m} \indicator_{A_{j}} f_{j},$, 
$\phi = \sum_{j=1}^{m} \indicator_{A_{j}} \phi_{j},$ where $\indicator_{S} $ is the characteristic function of a set $S$, i.e. 
$\indicator_{S}(x) := 1$ if $x \in S$ and $\indicator_{S}(x) := 0$ if $x \not\in S$. For such step functions we define 
$\int_{I} \duality{f}{\mu} := \sum_{j=1}^{m} \duality{f_{j}}{\mu(A_{j})} \in \ar$ and
$\int_{I} \phi \de \mu := \sum_{j=1}^{m} \phi_{j} \mu(A_{j}) \in \H$. If $\Step(\vartot{\mu};\H)$ (resp. $\Step(\vartot{\mu})$) is the set of $\H$-valued (resp. real valued) step maps with respect to $\mu$, then the maps
$\Step(\vartot{\mu};\H)$ $\function$ $\H : f \longmapsto \int_{I} \duality{f}{\mu}$ and
$\Step(\vartot{\mu})$ $\function$ $\H : \phi \longmapsto \int_{I} \phi \de \mu$ 
are linear and continuous when $\Step(\vartot{\mu};\H)$ and $\Step(\vartot{\mu})$ are endowed with the 
$\L^{1}$-seminorms $\norm{f}{\L^{1}(\vartot{\mu};\H)} := \int_I \norm{f}{} \de \vartot{\mu}$ and
$\norm{\phi}{\L^{1}(\vartot{\mu})} := \int_I |\phi| \de \vartot{\mu}$. Therefore they admit unique continuous extensions 
$\mathsf{I}_{\mu} : \L^{1}(\vartot{\mu};\H) \function \ar$ and 
$\mathsf{J}_{\mu} : \L^{1}(\vartot{\mu}) \function \H$,
and we set 
\[
  \int_{I} \duality{f}{\de \mu} := \mathsf{I}_{\mu}(f), \quad
  \int_{I} \phi \mu := \mathsf{J}_{\mu}(\phi),
  \qquad f \in \L^{1}(\vartot{\mu};\H),\quad \phi \in \L^{1}(\vartot{\mu}).
\]

If $\nu$ is bounded positive measure and $g \in \L^{1}(\nu;\H)$, arguing first on step functions, and then taking limits, it is easy to check that $\int_I\duality{f}{\de(g\nu)} = \int_I \duality{f}{g}\de \nu$ for every $f \in \L^{\infty}(\mu;\H)$. The following results (cf., e.g., \cite[Section III.17.2-3, pp. 358-362]{Din67}) provide a connection between functions with bounded variation and vector measures which will be implicitly used in the paper.

\begin{Thm}\label{existence of Stietjes measure}
For every $f \in \BV(I;\H)$ there exists a unique vector measure of bounded variation $\mu_{f} : \borel(I) \function \H$ such that 
\begin{align}
  \mu_{f}(\opint{c,d}) = f(d-) - f(c+), \qquad \mu_{f}(\clint{c,d}) = f(d+) - f(c-), \notag \\ 
  \mu_{f}(\clsxint{c,d}) = f(d-) - f(c-), \qquad \mu_{f}(\cldxint{c,d}) = f(d+) - f(c+). \notag 
\end{align}
whenever $c < d$ and the left hand side of each equality makes sense. Conversely, if $\mu : \borel(I) \function \H$ is a vector measure with bounded variation, and if $f_{\mu} : I \function \H$ is defined by 
$f_{\mu}(t) := \mu(\clsxint{\inf I,t} \cap I)$, then $f_{\mu} \in \BV(I;\H)$ and $\mu_{f_{\mu}} = \mu$.
\end{Thm}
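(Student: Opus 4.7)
The plan is to reduce to the real-valued Stieltjes construction by testing against vectors in $\H$. For each $h \in \H$, the scalar function $t \longmapsto \duality{f(t)}{h}$ belongs to $\BV(I;\ar)$ with $\pV(\duality{f}{h}, I) \le \norm{h}{} \pV(f,I)$, so by the classical one-dimensional Stieltjes theorem there is a unique signed Borel measure $\nu_h$ of bounded variation satisfying $\nu_h(\opint{c,d}) = \duality{f(d-)-f(c+)}{h}$ together with the analogues on $\clint{c,d}$, $\clsxint{c,d}$, $\cldxint{c,d}$, and with $\vartot{\nu_h}(I) \le \norm{h}{} \pV(f,I)$. For each $B \in \borel(I)$ the map $h \longmapsto \nu_h(B)$ is linear in $h$ (by uniqueness of the scalar construction) and bounded by $\pV(f,I)\norm{h}{}$, so by the Riesz representation theorem in $\H$ there exists a unique $\mu_f(B) \in \H$ with $\duality{\mu_f(B)}{h} = \nu_h(B)$ for every $h \in \H$.

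Given the construction, the four interval identities are automatic: e.g. $\duality{\mu_f(\opint{c,d})}{h} = \nu_h(\opint{c,d}) = \duality{f(d-)-f(c+)}{h}$ for every $h$ forces $\mu_f(\opint{c,d}) = f(d-)-f(c+)$, and the remaining cases are identical. Countable additivity of $\mu_f$ is first weak (inherited coordinate-by-coordinate from each $\nu_h$) and then automatically norm-countable additive by the Orlicz--Pettis theorem in the Hilbert setting. For the total variation I would fix a Borel partition $(B_n)$ of $I$, pick for each $n$ a unit vector $h_n \in \H$ with $\norm{\mu_f(B_n)}{} \le \nu_{h_n}(B_n) + \eps 2^{-n}$, and bound $\sum_n \nu_{h_n}(B_n)$ by $\pV(f,I)$ by expressing each $\nu_{h_n}(B_n)$ as a limit of Riemann--Stieltjes sums against a refinement of a fixed subdivision; this yields $\vartot{\mu_f}(I) \le \pV(f,I) < \infty$. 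Uniqueness of $\mu_f$ follows because two bounded-variation vector measures that agree on the $\pi$-system of half-open subintervals of $I$ must coincide on $\borel(I)$.

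For the converse, given $\mu$ of bounded variation, set $f_\mu(t) := \mu(\clsxint{\inf I, t} \cap I)$. For any finite subdivision $t_0 < \cdots < t_m$ in $I$, the Borel sets $A_j := \clsxint{t_{j-1}, t_j} \cap I$ are pairwise disjoint and $f_\mu(t_j) - f_\mu(t_{j-1}) = \mu(A_j)$ by additivity of $\mu$, so $\sum_{j=1}^m \norm{f_\mu(t_j) - f_\mu(t_{j-1})}{} \le \vartot{\mu}(I)$, proving $f_\mu \in \BV(I;\H)$ with $\pV(f_\mu, I) \le \vartot{\mu}(I)$. The identity $\mu_{f_\mu} = \mu$ reduces, by the uniqueness proved above, to checking agreement on half-open intervals $\clsxint{c,d}$: applying $\sigma$-additivity of $\mu$ to any sequence $t_n \uparrow c$ in $I$ gives $f_\mu(c-) = \lim_n \mu(\clsxint{\inf I, t_n} \cap I) = \mu(\clsxint{\inf I, c} \cap I)$, and analogously for $f_\mu(d-)$, whence $\mu_{f_\mu}(\clsxint{c,d}) = f_\mu(d-) - f_\mu(c-) = \mu(\clsxint{c,d})$.

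The main obstacle I anticipate is the careful bookkeeping of endpoint conventions (in particular the declared values $f(\inf I-) := f(\inf I)$ and $f(\sup I+) := f(\sup I)$ when the endpoints belong to $I$) and the mutual consistency of the four interval formulas; this consistency ultimately rests on the scalar identity $\nu_h(\{c\}) = \duality{f(c+)-f(c-)}{h}$, which correctly assigns the atomic masses of $\mu_f$ to the jumps of $f$ and reconciles the open, closed and half-open versions of the statement.
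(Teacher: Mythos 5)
The paper itself offers no proof of this theorem: it is quoted from Dinculeanu \cite[Section III.17.2-3]{Din67}, so the only question is whether your self-contained argument is sound. The forward construction (scalarize, get $\nu_h$ from the classical one-dimensional Stieltjes correspondence, recover $\mu_f(B)$ by Riesz representation, upgrade weak to norm countable additivity by Orlicz--Pettis) is a legitimate route. Two steps there are looser than they should be, though both are fixable. For the total variation: $\nu_{h_n}(B_n)$ is not a limit of Riemann--Stieltjes sums when $B_n$ is an arbitrary Borel set; you need inner/outer regularity to replace the $B_n$ by disjoint finite unions of subintervals, or, more cleanly, the domination $\vartot{\nu_h}\le \norm{h}{}\,\mu_{V_g}$, where $\mu_{V_g}$ is the Stieltjes measure of the cumulative variation of the left-regularized function (this is exactly the content of the Proposition following the theorem); that gives $\norm{\mu_f(B)}{}\le \mu_{V_g}(B)$ for all $B$, hence $\vartot{\mu_f}(I)\le \pV(f,I)$ at once. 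For uniqueness: do not restrict to half-open intervals of a single type, because sets of the form $\clsxint{c,d}$ never contain $\sup I$ and sets of the form $\cldxint{c,d}$ never contain $\inf I$, so agreement on such a family cannot detect an atom at the corresponding endpoint; you should use all four interval types appearing in the statement (they generate an algebra of sets reaching both endpoints, and finite measures agreeing on a generating algebra coincide).

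The genuine gap is in the converse, and it sits exactly where you predicted the ``bookkeeping'' difficulty. You verify $\mu_{f_\mu}=\mu$ only on sets $\clsxint{c,d}$; by the remark above this does not determine the measure at $\sup I$, and at that point the argument cannot be completed as written. Indeed, with the definition $f_\mu(t)=\mu(\clsxint{\inf I,t}\cap I)$ any atom of $\mu$ at $\sup I$ is invisible to $f_\mu$: if $I=\clint{0,1}$, $h\in\H$, $h\neq 0$, and $\mu=h\,\delta_1$, then $f_\mu\equiv 0$ and $\mu_{f_\mu}=0\neq\mu$; in general $\mu_{f_\mu}(\{\sup I\})=f_\mu(\sup I+)-f_\mu(\sup I-)=f_\mu(\sup I)-f_\mu(\sup I-)=0$ by continuity from below, while $\mu(\{\sup I\})$ may be nonzero (the left endpoint, by contrast, is handled correctly by this definition, since $f_\mu(\inf I+)-f_\mu(\inf I)=\mu(\{\inf I\})$). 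So the case $\sup I\in I$ with $\mu(\{\sup I\})\neq 0$ must be confronted explicitly --- e.g.\ by adjusting the value of $f_\mu$ at $t=\sup I$ to $\mu(\clint{\inf I,\sup I}\cap I)$, or by noting that the identity $\mu_{f_\mu}=\mu$ as literally stated requires no atom at $\sup I$ --- and your proof of the converse is incomplete without this discussion. The remaining parts of the converse (the $\BV$ estimate $\pV(f_\mu,I)\le\vartot{\mu}(I)$ and the computation of $f_\mu(c-)$ at interior points by continuity from below) are correct.
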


\begin{Prop}
Let $f  \in \BV(I;\H)$, let $g : I \function \H$ be defined by $g(t) := f(t-)$, for $t \in \Int(I)$, and by $g(t) := f(t)$, if 
$t \in \partial I$, and let $V_{g} : I \function \ar$ be defined by $V_{g}(t) := \pV(g, \clint{\inf I,t} \cap I)$. Then  
$\mu_{g} = \mu_{f}$ and $\vartot{\mu_{f}} = \mu_{V_{g}} = \pV(g,I)$.
\end{Prop}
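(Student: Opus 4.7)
The plan is to split the statement into two measure-theoretic identities, $\mu_g = \mu_f$ and $\vartot{\mu_f} = \mu_{V_g}$, with the scalar identity $\vartot{\mu_f}(I) = \pV(g,I)$ following by evaluating the second one at $B = I$. Both equalities will be obtained via the uniqueness statement in Theorem \ref{existence of Stietjes measure}: a (vector or positive) measure of bounded variation on $I$ is uniquely determined by its values on arbitrary subintervals.

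For $\mu_g = \mu_f$, I would first establish that $g$ has the same one-sided limits as $f$ at every point of $I$. At any interior $t$, a standard $\BV$ argument shows that the map $s \longmapsto f(s-)$ is left-continuous and has right limit $f(t+)$: indeed, if $\norm{f(u)-f(t\pm)}{} < \eps$ for $u$ in a one-sided punctured neighbourhood of $t$, then taking $u \to s^-$ one gets $\norm{f(s-)-f(t\pm)}{} \le \eps$. Therefore $g(t-) = f(t-)$ and $g(t+) = f(t+)$ for every interior $t$, and at the endpoints the conventions $g(\inf I-) = g(\inf I)$, $g(\sup I+) = g(\sup I)$, together with $g = f$ on $\partial I$, give the same conclusion. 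Plugging these equalities into the four characterising identities of Theorem \ref{existence of Stietjes measure} shows that $\mu_g$ and $\mu_f$ coincide on every interval $\clint{c,d}$, $\opint{c,d}$, $\clsxint{c,d}$, $\cldxint{c,d}$ of $I$, so by uniqueness $\mu_g = \mu_f$ on $\borel(I)$.

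For $\vartot{\mu_f} = \mu_{V_g}$, both sides are positive bounded Borel measures, so by the same uniqueness principle it suffices to check equality on half-open intervals $\clsxint{c,d}$ with $c < d$ in $\Int(I)$ (the endpoints being handled by passing to the limit). For the right-hand side, Theorem \ref{existence of Stietjes measure} gives $\mu_{V_g}(\clsxint{c,d}) = V_g(d-) - V_g(c-)$; since $g$ is left-continuous on $\Int(I)$ the non-decreasing function $V_g$ is left-continuous there as well, and additivity of the pointwise variation yields $V_g(d-) - V_g(c-) = V_g(d) - V_g(c) = \pV(g, \clint{c,d}) = \pV(g, \clsxint{c,d})$, where the last equality uses left-continuity of $g$ at $d$. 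For the left-hand side, using $\mu_f = \mu_g$ from the first step, one compares $\vartot{\mu_g}(\clsxint{c,d})$ to $\pV(g, \clsxint{c,d})$ by specialising the defining supremum of $\vartot{\mu_g}$ to partitions of $\clsxint{c,d}$ into consecutive half-open subintervals $\clsxint{c_{n-1}, c_n}$: for such atoms $\mu_g(\clsxint{c_{n-1}, c_n}) = g(c_n-) - g(c_{n-1}-) = g(c_n) - g(c_{n-1})$ by the left-continuity of $g$, so the sum $\sum \norm{\mu_g(\clsxint{c_{n-1}, c_n})}{}$ is exactly a variation sum of $g$, yielding $\vartot{\mu_g}(\clsxint{c,d}) \ge \pV(g, \clsxint{c,d})$; the reverse inequality comes from the standard fact that a general Borel partition of $\clsxint{c,d}$ can be refined to one whose atoms are half-open subintervals up to a set of $\mu_g$-null contributions (the only atoms of $\mu_g$ sit at jump points of $g$ and are absorbed into the refinement).

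The main obstacle I expect is precisely this reverse inequality: justifying that an arbitrary countable Borel partition produces no more mass than a pointwise-variation sum of $g$. The standard remedy is to invoke that $\mu_g$ has at most countably many atoms (located at $\discont(g)$), so on the complement the partition can be approximated from the outside by finite covers of disjoint half-open intervals — the remaining bookkeeping at the endpoints of $I$ and at atoms of $\mu_g$ is routine.
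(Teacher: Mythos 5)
The paper itself does not prove this proposition: it is imported from Dinculeanu \cite[Section III.17.2-3, pp. 358-362]{Din67}, so there is no internal proof to compare against, and your argument has to be judged on its own. The first half is fine: your computation of the one-sided limits of $g$ (namely $g(t-)=f(t-)$, $g(t+)=f(t+)$ at interior points, with the stated conventions at $\partial I$) is correct, and feeding these into the four identities of Theorem \ref{existence of Stietjes measure} together with its uniqueness clause does give $\mu_g=\mu_f$. Likewise, the identification of $\mu_{V_g}$ on half-open intervals through left-continuity of $V_g$ (which tacitly uses the standard fact that the jump of the variation function equals the norm of the jump of $g$) and the lower bound $\vartot{\mu_g}(\clsxint{c,d})\ge \pV(g,\clsxint{c,d})$, obtained by testing the total variation against partitions into consecutive half-open intervals, are correct.

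The genuine gap is exactly the step you flag, and your first description of its repair is not right: a countable Borel partition of $\clsxint{c,d}$ cannot be ``refined to one whose atoms are half-open subintervals'' (an interval partition is not a refinement of an arbitrary Borel partition), and the atoms of $\mu_g$ are not the real obstruction. The correct mechanism is a domination argument: first record that $\norm{\mu_g(J)}{}\le \mu_{V_g}(J)$ for every subinterval $J$ (for $J=\clsxint{s,t}$ this reads $\norm{g(t)-g(s)}{}\le V_g(t)-V_g(s)$, by left-continuity), and then use that $\vartot{\mu_g}$ and $\mu_{V_g}$ are finite Borel measures on an interval of $\ar$, so each member $B_n$ of a Borel partition can be approximated in $(\vartot{\mu_g}+\mu_{V_g})$-measure by a finite disjoint union $A_n$ of half-open intervals (outer regularity, or approximation by the generating algebra); estimating $\norm{\mu_g(B_n)}{}\le\norm{\mu_g(A_n)}{}+\vartot{\mu_g}(A_n\triangle B_n)\le \mu_{V_g}(B_n)+\mu_{V_g}(A_n\setmeno B_n)+\vartot{\mu_g}(A_n\triangle B_n)$ and summing with errors $\eps 2^{-n}$ gives $\vartot{\mu_g}(B)\le\mu_{V_g}(B)$ for every Borel $B$. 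This is essentially the outer-approximation idea of your closing paragraph, so the plan is salvageable, but as written the reverse inequality is asserted rather than proved, and the ``refinement''/``atoms'' phrasing would not survive scrutiny; note also that the interval domination above must be stated explicitly, since the approximation argument needs it. With that step made precise, equality on the generating half-open intervals plus the (routine but necessary) check of the point masses at $\inf I$ and $\sup I$ yields $\vartot{\mu_f}=\mu_{V_g}$, and evaluating at $I$ gives $\pV(g,I)$ as you say.
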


The measure $\mu_{f}$ is called \emph{Lebesgue-Stieltjes measure} or \emph{differential measure} of $f$. Let us see the connection with the distributional derivative. If $f \in \BV(I;\H)$ and if $\overline{f}: \ar \function \H$ is defined by
\begin{equation}\label{extension to R}
  \overline{f}(t) :=
  \begin{cases}
    f(t) 	& \text{if $t \in I$} \\
    f(\inf I)	& \text{if $\inf I \in \ar$, $t \not\in I$, $t \le \inf I$} \\
    f(\sup I)	& \text{if $\sup I \in \ar$, $t \not\in I$, $t \ge \sup I$}
  \end{cases},
\end{equation}
then, as in the scalar case, it turns out (cf. \cite[Section 2]{Rec11}) that $\mu_{f}(B) = \D \overline{f}(B)$ for every 
$B \in \borel(\ar)$, where $\D\overline{f}$ is the distributional derivative of $\overline{f}$, i.e.
\[
  - \int_\ar \varphi'(t) \overline{f}(t) \de t = \int_{\ar} \varphi \de \D \overline{f} 
  \qquad \forall \varphi \in \Czero_{c}^{1}(\ar;\ar),
\]
$\Czero_{c}^{1}(\ar;\ar)$ being the space of real continuously differentiable functions on $\ar$ with compact support.
Observe that $\D \overline{f}$ is concentrated on $I$: $\D \overline{f}(B) = \mu_f(B \cap I)$ for every $B \in \borel(I)$, hence in the remainder of the paper, if $f \in \BV(I,\H)$ then we will simply write
\begin{equation}
  \D f := \D\overline{f} = \mu_f, \qquad f \in \BV(I;\H),
\end{equation}
and from the previous discussion it follows that 
\begin{equation}
  \norm{\D f}{} = \vartot{\D f}(I) = \norm{\mu_f}{}  = \pV(f,I) \qquad \forall f \in \BV^\r(I;\H).
\end{equation}


\section{Main result}\label{S:main results}

We are now in position to state the main theorem of the paper.

\begin{Thm}\label{main thm}
Assume that $-\infty < a < b < \infty$, $\C \in \BV^\r(\clint{a,b};\Conv_\H)$, $y_0 \in \C(a)$, $S \subseteq \cldxint{a,b}$, and that for every $t \in S$ we are given a function $g_t : \C(t-) \function \C(t)$ such that $\Lipcost(g_t) \le 1$ and
\begin{equation}\label{sum gt-mr}
  \sum_{t \in S} \norm{g_t - \Id}{\infty,\C(t-)} < \infty.
\end{equation}
Then there exists a unique $y \in \BV^\r(\clint{a,b};\H)$ such that there is a measure 
$\mu : \borel(\clint{a,b}) \function \clsxint{0,\infty}$ and a function $v \in \L^1(\mu;\H)$ for which
\begin{alignat}{3}
  & y(t) \in \C(t) & \quad & \forall t \in \clint{a,b}, \label{constr. [0,T[-mr} \\
  & \! \D y = v \mu, & \label{Dy =vmu [0,T[-mr} \\
  & -v(t) \in N_{\C(t)}(y(t)) & \quad & \text{for $\mu$-a.e. $t \in \clint{a,b} \setmeno S$}, \label{diff. incl. [0,T[-mr} \\
  & y(t) = g_t(y(t-)) & \quad & \forall t \in S, \label{cond. on S, [0,T[-mr} \\
  & y(a)  = y_0. & \label{i.c. [0,T[-mr}
\end{alignat}
Moreover if $a \le s < t \le b$ we have
\begin{equation}\label{estimate for Var(y)_S gen}
  \pV(y,\clint{s,t})  \le \pV(\C,\clint{s,t}) + 
  \sum_{r \in S \cap \clint{s,t}} \left(\norm{g_r - \Id}{\infty,\C(r-)} - \d_\hausd(\C(r-),\C(r))\right).
\end{equation}
Finally if $y_{0,j} \in \C(a)$, $j = 1, 2$ and $y_j$ is the only function such that there is a measure 
$\mu_j : \borel(\clsxint{a,b}) \function \clsxint{0,\infty}$ and a function $v_j \in \L^1(\mu;\H)$ for which
\eqref{constr. [0,T[-mr}-\eqref{i.c. [0,T[-mr} hold with $y, v, \mu, y_0$ replaced respectively by 
$y_j, v_j, \mu_j, y_{0,j}$, then 
\begin{equation}\label{cont. dep.-mr}
  t \longmapsto \norm{y_1(t) - y_2(t)}{}^2 \text{ is nonincreasing}. \notag
\end{equation}
\end{Thm}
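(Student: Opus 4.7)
The plan has three stages: (i) prove uniqueness and the final monotonicity assertion simultaneously via the integration-by-parts formula for $\H$-valued $\BV$ functions together with the monotonicity of the normal cone; (ii) construct a solution when $S$ is finite by concatenating classical sweeping processes; and (iii) deduce the general case by a Helly-type compactness argument, using the variation estimate to maintain uniform control.

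For (i), let $y_1, y_2$ be two candidate solutions with data $y_{0,j}$ and associated triples $(\mu_j, v_j)$. Set $\mu := \mu_1 + \mu_2$, so that $\D y_j = w_j \mu$ for some $w_j \in \L^1(\mu;\H)$, and apply Moreau's chain rule to $\phi(t) := \tfrac12 \norm{y_1(t) - y_2(t)}{}^2$. The part of $\D\phi$ supported on $\clint{a,b}\setmeno S$ away from the common atoms of $\mu_1,\mu_2$ is controlled by the monotonicity of $N_{\C(t)}$ and the inclusion \eqref{diff. incl. [0,T[-mr}; at each jump $t \notin S$, a direct check using \eqref{normal cone} and \eqref{Dy =vmu [0,T[-mr} forces $y_j(t) = \Proj_{\C(t)}(y_j(t-))$ as in Moreau's original argument, so that $\norm{y_1(t)-y_2(t)}{} \le \norm{y_1(t-)-y_2(t-)}{}$ by $1$-Lipschitzness of the projection; at each $t \in S$ the same inequality holds directly from \eqref{cond. on S, [0,T[-mr} and $\Lipcost(g_t) \le 1$. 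Putting these together yields that $\phi$ is nonincreasing, proving both the contraction assertion and uniqueness.

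For (ii), when $S = \{s_1 < \cdots < s_N\}$ is finite, set $s_0 := a$ and $s_{N+1} := b$ and solve Moreau's classical sweeping process \eqref{y in C - BV - intro}--\eqref{in. cond. - BV - intro} on each $\clint{s_k, s_{k+1}}$ with initial datum $z_k$, where $z_0 := y_0$ and $z_k := g_{s_k}(y(s_k-))$; this is well-posed since $z_k \in \C(s_k)$. Concatenating produces a right-continuous $\BV$ function $y$; the required $\mu$ is the sum of the Moreau measures on the pieces together with Dirac atoms $\norm{g_{s_k}(y(s_k-))-y(s_k-)}{}\delta_{s_k}$, with $v$ equal to the corresponding unit jump vector at each $s_k$. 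Because \eqref{diff. incl. [0,T[-mr} is only required off $S$ the system is satisfied, and the estimate \eqref{estimate for Var(y)_S gen} follows by summing the classical bound $\pV(y,\cdot) \le \pV(\C,\cdot)$ on each subinterval and correcting the jump at $s_k \in S$ by $\norm{g_{s_k} - \Id}{\infty,\C(s_k-)} - \d_\hausd(\C(s_k-),\C(s_k))$ instead of the Moreau contribution.

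For (iii), the summability \eqref{sum gt-mr} makes $\{t \in S : g_t \neq \Id\}$ at most countable; enumerate it as $\{t_n\}_{n\in\en}$ and let $y_n$ be the solution from step (ii) associated with $S_n := \{t_1,\ldots,t_n\}$. Estimate \eqref{estimate for Var(y)_S gen} gives a uniform bound on $\pV(y_n,\clint{a,b})$, and the contraction of step (i), applied pairwise to $y_n$ and $y_m$ together with the tail of \eqref{sum gt-mr}, shows that $(y_n(t))$ is uniformly Cauchy in $t$, hence converges pointwise to a limit $y \in \BV^\r(\clint{a,b};\H)$. Taking $\mu$ as a weak-$\ast$ cluster point of $\vartot{\D y_n}$ and extracting the density $v$ by a Radon--Nikodym argument, the differential inclusion \eqref{diff. incl. [0,T[-mr} passes to the limit by graph-closedness of the maximal monotone operator $y \mapsto N_{\C(t)}(y)$. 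The main obstacle will be exactly this last passage to the limit: one must reconcile the changing base measures $\mu_n$ and densities $v_n$ with a common $(\mu, v)$ for the limit, which is routine for monotone inclusions once the strong convergence of $y_n \to y$ is in hand, but requires care in matching atoms at the points of $S$ with the prescribed behaviour \eqref{cond. on S, [0,T[-mr} in the limit.
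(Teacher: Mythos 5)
Your overall architecture is the same as the paper's: uniqueness and the contraction via the differential measure of $\norm{y_1-y_2}{}^2$, monotonicity of the normal cone off the atoms, the projection identity at atoms outside $S$, and $\Lipcost(g_t)\le 1$ at points of $S$; existence for finite $S$ by concatenating Moreau solutions (this is Lemma \ref{L:case S finite}); and the general case by exhausting $S$ with finite subsets. Two remarks on your Cauchy step, though. The ``contraction of step (i) applied pairwise to $y_n$ and $y_m$'' is not literally available, since $y_n$ and $y_m$ solve \emph{different} problems; what works (and is what the paper does) is to compare consecutive approximations: they coincide before the newly inserted point $s$, the discrepancy created at $s$ is at most $\norm{g_s-\Id}{\infty,\C(s-)}+\d_\hausd(\C(s-),\C(s))$ because the coarser solution performs Moreau's projection there, and this discrepancy then propagates without amplification by \eqref{moreau cont. dep.} and the $1$-Lipschitzness of the common $g_t$'s. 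In particular the tail of \eqref{sum gt-mr} alone is not sufficient: you also need $\sum_{t}\d_\hausd(\C(t-),\C(t))\le \pV(\C,\clint{a,b})<\infty$, which should be said explicitly.

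The genuine gap is the limit passage for \eqref{diff. incl. [0,T[-mr}. Taking $\mu$ as a weak-$*$ cluster point of $\vartot{\D y_n}$ and invoking ``graph-closedness of the maximal monotone operator'' does not close the argument: the inclusions for the approximants are stated with respect to measures and densities that change with $n$, there is no pointwise (or $\mu$-a.e.) convergence of the densities $v_n$, and weak-$*$ convergence of $\D y_n$ only pairs with \emph{continuous} integrands, whereas the normal-cone condition must be tested against merely measurable selections $z(t)\in\C(t)$ and against the discontinuous limit $y$ itself. This reconciliation is precisely the heart of the proof, not a routine step. The paper's device is to fix \emph{one} measure $\mu=\D\ell_\C+\sum_{t\in S}\norm{g_t-\Id}{\infty,\C(t-)}\delta_t$ that serves all approximants, and to prove a uniform $\L^\infty(\mu)$ bound on the densities $v_n$ (this is why Lemma \ref{L:mu=Dell}, based on the arc-length parametrization $\ell_\C$, is proved first and built into Lemma \ref{L:case S finite}). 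With this in hand, $\vartot{\D y_n}\le C\mu$ gives equi-integrability, so $\D y_n\rightharpoonup \D y$ by the Dunford--Pettis-type Theorem \ref{measure dunford pettis}, while $v_n\rightharpoonup v$ weakly in $\L^2(\mu;\H)$; the inclusion then passes to the limit through the integral characterization of Lemma \ref{L:int. charact.}, because $\int\duality{y_n-z}{v_n}\de\mu$ is a strong-times-weak pairing over the \emph{fixed} measure $\mu$, and $\D y=v\mu$ is identified by testing with bounded Borel functions. Your proposal is missing exactly this mechanism (a common dominating measure plus a uniform density bound, or some equivalent substitute), so the existence proof in the countable case does not go through as written; the remaining parts, including the matching of the prescribed jumps at $t\in S$ in the limit (immediate from uniform convergence and continuity of $g_t$) and the variation estimate \eqref{estimate for Var(y)_S gen} by lower semicontinuity, are fine.
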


In Section \ref{S:applications} we will show a series of results that can be deduced from Theorem \ref{main thm}.


\section{Proofs}\label{S:proofs}

We start by recalling the existence and uniqueness result of classical sweeping processes due to J.J. Moreau (cf. \cite{Mor77}).

\begin{Thm}\label{T:sweeping process existence}
If $-\infty < a < b < \infty$, $\C \in \BV^\r(\clsxint{a,b};\Conv_\H)$, and $y_0 \in \C(a)$, then there exists a unique 
$y \in \BV^\r(\clsxint{a,b};\H)$ such that there is a measure $\mu : \borel(\clsxint{a,b}) \function \clsxint{0,\infty}$ and a function $v \in \L^1(\mu;\H)$ for which
\begin{alignat}{3}
 & y(t) \in \C(t) & \quad & \forall t \in \clsxint{a,b}, \label{constr-classical}\\
 & \D y = v \mu, & \label{Dy decomposition-classical} \\
 &  -v(t) \in N_{\C(t)}(y(t)) & \quad & \text{for $\mu$-a.e. $t \in \opint{a,b}$}, \label{diff incl-classical} \\
 &  y(a) = y_0. & \label{i.c.-classical}
\end{alignat}
Moreover 
\begin{equation}\label{estimate for var(y)-classical}
  \pV(y,\clint{s,t}) \le \pV(\C,\clint{s,t}) \qquad \forall s, t \in \clsxint{a,b}, \ s < t.
\end{equation}
Finally if $y_{0,j} \in \C(a)$, $j = 1, 2$ and $y_j$ is the only function such that there is a measure 
$\mu_j : \borel(\clint{a,b}) \function \clsxint{0,\infty}$ and a function $v_j \in \L^1(\mu;\H)$ for which
\eqref{constr-classical}-\eqref{i.c.-classical} hold with $y, v, \mu, y_0$ replaced respectively by 
$y_j, v_j, \mu_j, y_{0,j}$, then 
\begin{equation}\label{moreau cont. dep.}
  t \longmapsto \norm{y_1(t) - y_2(t)}{}^2 \text{ is nonincreasing}.
\end{equation}
\end{Thm}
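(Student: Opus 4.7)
The statement is Moreau's classical right-continuous $\BV$ sweeping-process theorem, cited from \cite{Mor77}. I would give a self-contained proof along the lines of Moreau's original argument, following the scheme: reduction to the $1$-Lipschitz case via a change of variable, catching-up algorithm in the Lipschitz case, and back-substitution.

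First, I would reduce to a $1$-Lipschitz moving set. Set $\r(t) := \pV(\C,\clint{a,t})$; this is nondecreasing and right continuous with $\r(b) < \infty$, and since $\d_\hausd(\C(s),\C(t)) \le \r(t) - \r(s)$, the set-valued map $\C$ factors through $\r$: there is a well-defined $1$-Lipschitz map $\widetilde\C : \r(\clint{a,b}) \function \Conv_\H$ with $\widetilde\C(\r(t)) = \C(t)$ for every $t$, and this map extends uniquely by Lipschitz continuity to $\clint{0,L}$ where $L:=\r(b)$. It therefore suffices to solve the Lipschitz problem for $\widetilde\C$ on $\clint{0,L}$ and then define $y(t) := \widetilde y(\r(t))$ and $\mu := \D\r$; indeed then $\D y = (\widetilde y' \circ \r)\,\mu$ by the change of variable for Stieltjes integrals, and $y(t) \in \widetilde\C(\r(t)) = \C(t)$, with $v(t) = \widetilde y'(\r(t))$ satisfying \eqref{diff incl-classical}.

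In the Lipschitz case, I would run the catching-up algorithm: for each $n$ take a partition $0 = s_0 < s_1 < \cdots < s_{N_n} = L$ with mesh $\to 0$, set $\widetilde y^n_0 = y_0$ and iterate $\widetilde y^n_{k+1} = \Proj_{\widetilde\C(s_{k+1})}(\widetilde y^n_k)$. Since $\widetilde y^n_k \in \widetilde\C(s_k)$, one has $\|\widetilde y^n_{k+1} - \widetilde y^n_k\| = \d(\widetilde y^n_k,\widetilde\C(s_{k+1})) \le \d_\hausd(\widetilde\C(s_k),\widetilde\C(s_{k+1})) \le s_{k+1}-s_k$, so the piecewise-constant interpolants $\widetilde y^n$ are uniformly $1$-Lipschitz and take values in a bounded subset of $\H$. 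By Helly's selection principle (in the $\BV$ version, applied pointwise), a subsequence converges pointwise to a $1$-Lipschitz $\widetilde y$ with $\widetilde y(s) \in \widetilde\C(s)$ everywhere. From the characterization of the projection, $\duality{z - \widetilde y^n_{k+1}}{\widetilde y^n_k - \widetilde y^n_{k+1}} \le 0$ for every $z \in \widetilde\C(s_{k+1})$, which after dividing by $s_{k+1}-s_k$ and passing to the limit against any Lipschitz selection $s \mapsto z(s) \in \widetilde\C(s)$ yields $\duality{z(s) - \widetilde y(s)}{\widetilde y'(s)} \ge 0$ a.e., i.e.\ $-\widetilde y'(s) \in N_{\widetilde\C(s)}(\widetilde y(s))$; the existence of sufficiently many such selections is guaranteed by the fact that $\widetilde\C$ is $1$-Lipschitz in Hausdorff distance (one can take $z(s) := \Proj_{\widetilde\C(s)}(\zeta)$ for arbitrary $\zeta \in \H$).

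For uniqueness and the contraction estimate \eqref{moreau cont. dep.}, let $y_1,y_2$ be two solutions and put $\mu := \mu_1 + \mu_2$, writing $\D y_j = v_j\,\mu$ after Radon--Nikodym (the inclusion \eqref{diff incl-classical} is preserved). The monotonicity of $N_{\C(t)}$ gives $\duality{v_1(t)-v_2(t)}{y_1(t)-y_2(t)} \le 0$ for $\mu$-a.e.\ $t$. The key computation is a $\BV$ chain rule for $\phi(t) := \tfrac12\|y_1(t)-y_2(t)\|^2$: for a right-continuous $\H$-valued $\BV$ function $f$ one has
\[
  \tfrac12\|f(t)\|^2 - \tfrac12\|f(s)\|^2 = \int_{\opint{s,t}} \duality{f(r-)}{\de \D f(r)} + \tfrac12 \sum_{r \in \opint{s,t}} \|f(r) - f(r-)\|^2.
\]
Applied to $f = y_1 - y_2$, the integral term is $\le 0$ by monotonicity; the jump part, by the characterization $y_j(r) = \Proj_{\C(r)}(y_j(r-))$ (which follows from the inclusion at the atoms of $\mu$) and the non-expansivity of $\Proj_{\C(r)}$, satisfies $\|y_1(r)-y_2(r)\| \le \|y_1(r-) - y_2(r-)\|$, so it adds a nonpositive contribution as well. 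Hence $\phi$ is nonincreasing, which yields both uniqueness (taking $y_{0,1}=y_{0,2}$) and \eqref{moreau cont. dep.}. The estimate \eqref{estimate for var(y)-classical} is inherited from the discrete estimate $\pV(\widetilde y^n,\clint{s,t}) \le t - s$ in the rescaled time, which pulls back to $\pV(y,\clint{s,t}) \le \r(t)-\r(s) = \pV(\C,\clint{s,t})$. The main obstacle is the precise management of jumps in the last step, i.e.\ justifying the jump relation $y(t) = \Proj_{\C(t)}(y(t-))$ from \eqref{diff incl-classical} and the $\BV$ chain rule; both are standard but delicate in the right-continuous $\BV$ setting.
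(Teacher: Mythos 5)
For this statement the paper gives no proof of its own: it is Moreau's classical theorem, and the text merely records that existence and uniqueness are \cite[Proposition 3b]{Mor77}, the variation estimate is \cite[Proposition 2c]{Mor77}, and the contraction property is \cite[Proposition 2b]{Mor77}. Your overall scheme (catching-up algorithm plus monotonicity of normal cones) is indeed Moreau's, and your uniqueness/contraction argument is essentially sound: the chain rule for $\norm{y_1-y_2}{}^2$, the sign of the diffuse part via monotonicity, and nonexpansivity of $\Proj_{\C(r)}$ at the atoms (where \eqref{Dy decomposition-classical}--\eqref{diff incl-classical} do force $y_j(r)=\Proj_{\C(r)}(y_j(r-))$, by \eqref{normal cone}) is exactly the right mechanism.

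The existence part, however, has a genuine gap, and it sits precisely on the phenomenon this paper is about. First, a structural point: the range of $\r(t)=\pV(\C,\clint{a,t})$ omits the gaps $\opint{\r(t-),\r(t)}$ created by the jumps of $\C$, and these gaps are not in the closure of the range, so $\widetilde\C$ does \emph{not} ``extend uniquely by Lipschitz continuity'' to $\clint{0,L}$; an extension exists (e.g.\ the geodesic filling $\lambda\mapsto\Cl((1-\lambda)\C(t-)+\lambda\C(t))$ for $\d_\hausd$), but it is a choice. The fatal point comes after the pull-back $y=\widetilde y\circ\r$, $\mu=\D\r$. At an atom $t$ of $\mu$ the density of $\D y$ is the difference quotient $v(t)=(y(t)-y(t-))/\mu(\{t\})$ (not $\widetilde y'(\r(t))$; cf.\ the chain rule \cite[Theorem A.7]{Rec11}), so \eqref{diff incl-classical} at such an atom reads $y(t-)-y(t)\in N_{\C(t)}(y(t))$, which by \eqref{normal cone} is \emph{equivalent} to $y(t)=\Proj_{\C(t)}(y(t-))$. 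Your construction instead produces $y(t)=\widetilde y(\r(t))$, the terminal value of the Lipschitz sweeping process \emph{along the chosen filling} started at $y(t-)$, and these two jump laws differ in general: this is exactly the discrepancy $\Pbar\neq\P$ of Theorem \ref{T:Pbar} and the discussion in the introduction (cf.\ \cite[Section 5.3]{Rec11}, \cite{KreRec14}). A concrete instance: let $\C$ be a unit disc in $\ar^2$ whose centre jumps from $(0,0)$ to $(A,0)$, and start from the top point of the disc; sweeping along the segment filling leaves the point at height $2e^{-A}/(1+e^{-2A})$, while the direct projection leaves it at height $1/\sqrt{A^2+1}$. Hence the function you build violates \eqref{diff incl-classical} on the atoms of $\mu$; what it actually solves is the generalized problem \eqref{constr. [0,T[-mr}--\eqref{i.c. [0,T[-mr} of Theorem \ref{main thm} with $g_t$ given by sweeping along the filling --- the object this paper is designed to handle --- not the classical Moreau problem. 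A correct existence proof must run the catching-up scheme directly on the discontinuous data $\C$, so that successive projections reproduce the jump law automatically; note also that in an infinite-dimensional $\H$ your appeal to Helly's theorem yields only weakly convergent subsequences, whereas Moreau obtains strong convergence of the approximants from Cauchy estimates, again based on monotonicity.
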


Concerning Theorem \ref{T:sweeping process existence} let us observe that the existence and uniqueness of the solution $y$ is proved in \cite[Proposition 3b]{Mor77}, while formula \eqref{estimate for var(y)-classical} is proved in \cite[Proposition 2c]{Mor77}. Finally the last statement is proved in \cite[Proposition 2b]{Mor77}.
 
It is important to compare Theorem \ref{T:sweeping process existence} to our main result, Theorem \ref{main thm}: it is possible to see that Theorem \ref{main thm} includes the statement of Theorem \ref{T:sweeping process existence} when 
$g_t: \C(t-) \function \C(t)$ is given by the projection onto $\C(t)$, since in this case 
$\norm{g_t - \Id}{\infty,\C(t-)}\le  \d_{\hausd}(\C(t-),\C(t))$. The goal of our main theorem is to allow for different prescribed behaviors at points $t\in S$, and the proof will be based on a suitable combination of Theorem \ref{T:sweeping process existence} with some explicit applications of the maps $g_t$.
  
In the following Lemma we provide an integral formulation of the sweeping process.

\begin{Lem}\label{L:int. charact.}
Assume that $-\infty < a < b < \infty$, $\mu : \borel(\clsxint{a,b}) \function \clsxint{0,\infty}$ is a measure, 
$B \in \borel(\clsxint{a,b})$, and $\mu(B) \neq 0$. If $\C \in \BV^\r(\clsxint{a,b};\Conv_\H)$, $v \in \L^1(\mu;\H)$, 
$y \in \BV^\r(\clsxint{a,b};\H)$, and $y(t) \in \C(t)$ for every $t \in \clsxint{a,b}$, then the following two conditions are equivalent.
\begin{itemize}
\item[(i)]
  $-v(t) \in N_{\C(t)}(y(t))$ for $\mu$-a.e. $t \in B$.
\item[(ii)]
  $\displaystyle{\int_{B} \duality{y(t) - z(t)}{v(t)} \de \mu(t) \le 0}$ for every $\mu$-measurable  
  $z : \clint{0,T} \function \H$ such that $z(t) \in \C(t)$ for every $t \in B$.
\end{itemize}
\end{Lem}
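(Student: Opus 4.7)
The plan is to prove the two implications separately: the first follows directly from the pointwise definition of the normal cone, while the second requires a measurable-selection argument.

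For $(i)\Rightarrow(ii)$, fix any admissible $z$. Applying the defining inequality of the normal cone \eqref{normal cone} to $-v(t)\in N_{\C(t)}(y(t))$ with the test point $z(t)\in\C(t)$ gives $\duality{-v(t)}{z(t)-y(t)}\le 0$, i.e.\ $\duality{y(t)-z(t)}{v(t)}\le 0$ for $\mu$-a.e.\ $t\in B$. Since the integrand is non-positive $\mu$-a.e., its integral over $B$ is well defined in $[-\infty,0]$ and (ii) follows at once.

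For the non-trivial direction $(ii)\Rightarrow(i)$, I would argue by contradiction. Assume that
\[
  B_0:=\{t\in B\ :\ -v(t)\notin N_{\C(t)}(y(t))\}
\]
satisfies $\mu(B_0)>0$. By \eqref{normal cone}, for every $t\in B_0$ there exists $w\in\C(t)$ with $\duality{y(t)-w}{v(t)}>0$; by convexity of $\C(t)$, replacing $w$ by $y(t)+\lambda(w-y(t))\in\C(t)$ with $\lambda\in\opint{0,1}$ small enough (the scalar product is simply multiplied by $\lambda$) we may further arrange $\norm{w-y(t)}{}\le 1$. For every $n\in\en$ introduce the multifunction
\[
  \Gamma_n(t):=\{w\in\C(t)\ :\ \duality{y(t)-w}{v(t)}\ge 1/n,\ \norm{w-y(t)}{}\le 1\},\qquad t\in B,
\]
whose values are closed, convex and bounded, and note that $B_0=\bigcup_{n\in\en}\{t\in B_0\ :\ \Gamma_n(t)\ne\varnothing\}$. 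Hence some $n_0\in\en$ satisfies $\mu(B_1)>0$, with $B_1:=\{t\in B_0\ :\ \Gamma_{n_0}(t)\ne\varnothing\}$.

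A classical measurable-selection theorem (of Kuratowski--Ryll-Nardzewski or Castaing type) then yields a $\mu$-measurable selection $z_*:B_1\function\H$ with $z_*(t)\in\Gamma_{n_0}(t)$; extending by $z_*(t):=y(t)$ on $\clsxint{a,b}\setmeno B_1$ produces a $\mu$-measurable, bounded (by $\norm{y}{\infty,\clsxint{a,b}}+1$) map with $z_*(t)\in\C(t)$ for every $t\in B$. Using it as a test function in (ii) gives
\[
  \int_B\duality{y(t)-z_*(t)}{v(t)}\de\mu(t)
  =\int_{B_1}\duality{y(t)-z_*(t)}{v(t)}\de\mu(t)\ge\frac{\mu(B_1)}{n_0}>0,
\]
a contradiction. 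The main obstacle will be the verification of the hypotheses of the selection theorem, which reduces to the $\borel(\clsxint{a,b})$-measurability of $t\longmapsto\C(t)$: since $\C\in\BV^\r(\clsxint{a,b};\Conv_\H)$ is $\d_\hausd$-continuous outside an at-most-countable set, $t\longmapsto\d(w,\C(t))$ is Borel for every $w\in\H$, and the multifunction $\Gamma_{n_0}$ inherits the required measurability from $\C$, $y$ and $v$.
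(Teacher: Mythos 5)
Your first implication coincides with the paper's. For the converse you take a genuinely different route: the paper fixes a $\mu$-Lebesgue point $t$ of $v$ and of $\duality{y}{v}$ with respect to the Vitali relation $\{(t,\clint{t-h,t+h}\cap B)\ :\ h>0,\ t\in B\}$, uses a measurable selection of $\C$ passing through an arbitrary point $\zeta_t\in\C(t)$ (which can even be written explicitly as $\tau\longmapsto\Proj_{\C(\tau)}(\zeta_t)$), localizes the test function to $\clint{t-h,t+h}$ and lets $h\searrow 0$; you instead argue by contradiction via a measurable selection of the ``violation'' multifunction $\Gamma_{n_0}$. The strategy is legitimate, but the selection step, which you yourself flag, is a real gap in the paper's generality. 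First, $\H$ is not assumed separable, and the Kuratowski--Ryll-Nardzewski/Castaing theorems you invoke require values in a separable complete metric space; moreover strong $\mu$-measurability of $z_*$ forces essential separable-valuedness, which is unclear here since the sets $\C(t)$ themselves need not be separable. Second, even granting separability, before you can choose $n_0$ you need the sets $\{t\in B_0:\Gamma_n(t)\neq\void\}$ (and $B_0$ itself) to be $\mu$-measurable; this does not follow from the Borel measurability of $t\longmapsto\d(w,\C(t))$ alone, because $\Gamma_n(t)$ is the intersection of $\C(t)$ with a half-space and a ball whose data vary only measurably in $t$, and measurability of such intersections and of their domains is normally obtained via graph measurability plus the projection (Aumann--von Neumann) theorem on the $\mu$-completed $\sigma$-algebra. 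None of this is automatic ``inheritance'' from $\C$, so as written the steps ``some $n_0$ satisfies $\mu(B_1)>0$'' and ``select $z_*$'' are unjustified.

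The gap can be closed, and the contradiction machinery avoided altogether, by using a canonical witness instead of an abstract selection: by \eqref{normal cone}, $-v(t)\in N_{\C(t)}(y(t))$ if and only if $y(t)=\Proj_{\C(t)}(y(t)-v(t))$. Set $p(t):=\Proj_{\C(t)}(y(t)-v(t))$; testing the variational inequality characterizing the projection with the point $y(t)\in\C(t)$ gives $\duality{y(t)-p(t)}{v(t)}\ge\norm{y(t)-p(t)}{}^2$. Moreover $p$ is strongly $\mu$-measurable: $t\longmapsto(\C(t),y(t)-v(t))$ is measurable with essentially separable range (a $\BV^\r$ map is right continuous with left limits, and $v$ is strongly measurable), while $(\K,x)\longmapsto\Proj_\K(x)$ is jointly continuous, since $x\longmapsto\Proj_\K(x)$ is $1$-Lipschitz and $\norm{\Proj_\A(x)-\Proj_\B(x)}{}^2\le\d_\hausd(\A,\B)\left(\d(x,\A)+\d(x,\B)\right)$. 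Taking $z:=p$ on $B$ and $z:=y$ elsewhere in (ii) (the integrand is nonnegative, so the integral is well defined) yields $\int_B\norm{y(t)-p(t)}{}^2\de\mu(t)\le 0$, hence $p=y$ $\mu$-a.e.\ on $B$, which is exactly (i); this also makes the auxiliary bound $\norm{w-y(t)}{}\le 1$ unnecessary. If you prefer to keep the $\Gamma_n$ argument, you should state explicitly that you assume $\H$ separable and invoke the measurable projection/selection theorems on the completed $\sigma$-algebra.
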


\begin{proof}
Let us start by assuming that (i) holds and let $z : \clsxint{a,b} \function \H$ be a $\mu$-measurable function such that 
$z(t) \subseteq \C(t)$ for every $t \in B$. Then it follows that
\[
  \duality{y(t) - z(t)}{v(t)} \le 0 \qquad \text{$\forall t \in B$},
\]
and integrating over $B$ we infer condition (ii). Now assume that (ii) is satisfied and observe that
$\mathfrak{V} = \{(t,\clint{t-h,t+h} \cap B)\ :\ h > 0,\ t \in B\}$ is a $\mu$-Vitali relation covering $B$ according to the definition given in \cite[Section 2.8.16, p. 151]{Fed69}. Recall that if $f \in \L^1(\mu, B; \H)$ then there exists a 
$\mu$-zero measure set $Z$ such that $f(B \setmeno Z)$ is separable (see, e.g., \cite[Property M11, p. 124]{Lan93}), therefore from (the proof) of \cite[Corollary 2.9.9., p. 156]{Fed69} it follows that 
\begin{equation}\label{Dl-Lebesgue points}
  \lim_{h \searrow 0} \frac{1}{\mu(\clint{t-h,t+h} \cap B)} \int_{\clint{t-h,t+h} \cap B} \norm{f(\tau) - f(t)}{E} \de \mu(\tau) 
  = 0 \qquad \text{for $\mu$-a.e. $t \in B$}.
\end{equation}
In \cite{Fed69} the points $s$ satisfying \eqref{Dl-Lebesgue points} are called \emph{$\mu$-Lebesgue points of $f$ on 
$B$ with respect to the $\mu$-Vitali relation $\mathfrak{V}$} covering $B$. Let $L$ be the set of $\mu$-Lebesgue points for both $\tau \longmapsto v(\tau)$ and $\tau \longmapsto \duality{y(\tau)}{v(\tau)}$ on $B$ with respect to 
$\mathfrak{V}$, fix $t \in B$, and choose $\zeta_t \in \C(t)$ arbitrarily. Thanks to \cite[Theorem III.9, p. 67]{CasVal77} there exists a $\mu$-measurable function $\zeta : \clsxint{a,b} \function \H$ such that $\zeta(t) = \zeta_t$ and 
$\zeta(\tau) \in \C(\tau)$ for every $\tau \in \clsxint{a,b}$, therefore a straighforward computation shows that 
\[ 
  \lim_{h \searrow 0}  
  \frac{1}{\mu(\clint{t-h,t+h} \cap B)} \int_{\clint{t-h,t+h} \cap B} \duality{\zeta(\tau)}{v(\tau)} \de \mu(\tau) = 
  \duality{\zeta_t}{v(t)}.
\]
The function 
$z(\tau) := \indicator_{\clsxint{a,b} \cap \clint{t-h,t+h}}(\tau) \zeta(\tau) + \indicator_{\clsxint{a,b} \setmeno \clint{t-h,t+h}}(\tau) y(\tau)$, 
$\tau \in \clsxint{a,b}$, is well defined for every sufficiently small $h > 0$, therefore $z(\tau) \in \C(\tau)$ for every 
$\tau \in \clsxint{a,b}$, thus taking $z$ in condition (iii) we get
\[
  \int_{\clint{t-h,t+h} \cap B} \duality{y(\tau)}{v(\tau)}\de \mu(\tau) \le 
  \int_{\clint{t-h,t+h} \cap B} \duality{\zeta_t}{v(\tau)} \de \mu(\tau).
\] 
Dividing this inequality by $\mu(\clint{t-h,t+h} \cap B)$ and taking the limit as $h \searrow 0$ we get
$\duality{y(t) - \zeta_t}{v(t)} \le 0$. Therefore we have proved that
\begin{equation}
  \duality{y(t) - z}{v(t)} \le 0 \quad \forall z \in \C(t), \quad \text{for $\mu$-a.e. $t \in B$,} \notag
\end{equation}
i.e. condition (i) holds.
\end{proof}

Now it is convenient to recall the notion of normalized arc-length parametrization for a metric-space-valued curve, provided by the following proposition (cf., e.g., \cite{Fed69, Rec11})
\begin{Prop}\label{D:reparametrization}
Assume that \eqref{X complete metric space} is satisfied and that $f \in \BV(\clint{a,b};\X)$. We define 
$\ell_f : \clint{a,b} \function \clint{a,b}$ by
\begin{equation}\notag
  \ell_f(t) := 
  \begin{cases}
    a + \dfrac{b-a}{\pV(f,\clint{a,b})}\pV(f,\clint{a,t}) & \text{if $\pV(f,\clint{a,b}) \neq 0$} 	\\
    															\\
    a 								           & \text{if $\pV(f,\clint{a,b}) = 0$}
  \end{cases}
    \qquad t \in \clint{a,b}.
\end{equation}
If $\X = \H$, a Hilbert space, and $f \in \BV^\r(\clint{a,b};\H)$, then there is a unique $\ftilde \in \Lip(\clint{a,b};\H)$ such that $\Lipcost(\ftilde) \le \pV(f,\clint{a,b})/(b-a)$ and 
\begin{align}
  & f = \utilde \circ \ell_f, \label{reparametrization1} 
  \\ 
  & \utilde(\ell_f(t-)(1-\lambda) + \ell_f(t)\lambda) =  
    (1-\lambda)f(t-) + \lambda f(t) 
     \qquad \forall t \in \clint{a,b},\ \forall \lambda \in \clint{0,1}. \label{reparametrization2}
\end{align}
\end{Prop}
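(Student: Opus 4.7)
The object to construct is the standard arc-length reparametrization of the $\BV$ curve $f$, and the argument splits into four steps: defining $\ftilde$ on the image of $\ell_f$, extending it across the countable collection of gaps left out of this image, verifying the Lipschitz bound globally, and finally checking uniqueness. Throughout I write $V := \pV(f,\clint{a,b})$; the case $V = 0$ (where $f$ is constant and any constant $\ftilde$ works) is trivial, so assume $V > 0$.

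First I would collect the structural properties of $\ell_f$. By the preceding proposition applied to $f \in \BV^\r$, the map $t \longmapsto \pV(f,\clint{a,t})$ is nondecreasing and right-continuous on $\clint{a,b}$, and its jump at any $t \in \discont(f)$ equals $\norm{f(t) - f(t-)}{}$. Consequently $\ell_f$ is nondecreasing and right-continuous with $\ell_f(a) = a$, $\ell_f(b) = b$, the complement of its image in $\clint{a,b}$ is the disjoint union of the open gaps $\opint{\ell_f(t-),\ell_f(t)}$ indexed by $t \in \discont(f)$, and the crucial jump identity
\[
  \ell_f(t) - \ell_f(t-) = \frac{b-a}{V}\norm{f(t) - f(t-)}{} \qquad \forall t \in \discont(f)
\]
holds.

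Next I define $\ftilde$ on $\ell_f(\clint{a,b})$ by $\ftilde(\ell_f(t)) := f(t)$. Well-definedness on the flat parts of $\ell_f$ is immediate: if $\ell_f(s) = \ell_f(t)$ with $s < t$, then $\pV(f,\clint{s,t}) = 0$, so $f$ is constant on $\clint{s,t}$ and $f(s) = f(t)$. For any $a \le s < t \le b$,
\[
  \norm{\ftilde(\ell_f(t)) - \ftilde(\ell_f(s))}{} = \norm{f(t) - f(s)}{} \le \pV(f,\clint{s,t}) = \frac{V}{b-a}\bigl(\ell_f(t) - \ell_f(s)\bigr),
\]
so $\ftilde$ is $V/(b-a)$-Lipschitz on the image. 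I would then extend $\ftilde$ across each closed gap $\clint{\ell_f(t-),\ell_f(t)}$ by the affine rule \eqref{reparametrization2}; this matches the already-defined boundary values $f(t-)$ and $f(t)$, and on a single gap the ratio $\norm{f(t)-f(t-)}{}/(\ell_f(t)-\ell_f(t-))$ equals exactly $V/(b-a)$ by the jump identity. A triangle-inequality argument, together with the monotonicity of $\ell_f$, then upgrades these local estimates into the global Lipschitz bound with constant $V/(b-a)$.

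For uniqueness one observes that any Lipschitz $\ftilde$ satisfying \eqref{reparametrization1} and \eqref{reparametrization2} is determined on $\ell_f(\clint{a,b})$ by the first relation and on every closed gap by the explicit interpolation formula in the second; these two sets together exhaust $\clint{a,b}$. The main technical point I anticipate is the clean verification of the Lipschitz bound at the interfaces between image points and points lying in the gaps: once the jump identity for $\ell_f$ is in hand, the ``speed'' $V/(b-a)$ is the same on the image and on every gap, so the bound becomes essentially automatic, but the bookkeeping when the two arguments straddle several gaps still requires some care.
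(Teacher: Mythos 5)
Your construction is correct and is exactly the standard arc-length reparametrization argument; the paper itself gives no proof of Proposition \ref{D:reparametrization} but delegates it to \cite{Fed69, Rec11}, where the same scheme is carried out (define $\ftilde$ on the image of $\ell_f$ by $\ftilde(\ell_f(t)):=f(t)$, which is well defined and $\pV(f,\clint{a,b})/(b-a)$-Lipschitz there, interpolate affinely on the closed gaps $\clint{\ell_f(t-),\ell_f(t)}$ using the jump identity $\ell_f(t)-\ell_f(t-)=\tfrac{b-a}{\pV(f,\clint{a,b})}\norm{f(t)-f(t-)}{}$, and glue the estimates by inserting the gap endpoints). The only minor imprecision is your description of the complement of the image of $\ell_f$: it is the union of the open gaps possibly together with some left endpoints $\ell_f(t-)$ that are not attained, but since you define $\ftilde$ on the closed gaps and the endpoint values are forced by consistency with $f(t-)$, this does not affect either the construction or the uniqueness argument.
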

The normalization factor in Definition \ref{D:reparametrization} is not necessary in the proofs of our theorems but it is consistent and simplifies the statement of Theorem \ref{T:Pbar}.

In the following lemma we show that we can take $\mu = \D\ell_\C$, and in this case we have an explicit bound for the density of $\D y$ with respect to $\D\ell_\C$.

\begin{Lem}\label{L:mu=Dell}
Assume that $-\infty < a < b < \infty$, $\C \in \BV^\r(\clsxint{a,b};\Conv_\H)$, $y_0 \in \C(c)$, and $y$ is the only function such that there is a measure $\mu : \borel(\clsxint{a,b}) \function \clsxint{0,\infty}$ and a function $v \in \L^1(\mu;\H)$ for which \eqref{constr-classical}-\eqref{i.c.-classical} hold. Then there is a unique (up to $\D\ell_\C$-equivalence) 
$w \in \L^1(\D\ell_\C;\H)$ such that 
\begin{equation}
  \D y = w \D \ell_\C,
\end{equation}
and we have 
\begin{align}
  & -w(t) \in N_{\C(t)}(y(t)) \qquad \text{for $\D\ell_\C$-a.e. $t \in \opint{a,b}$}, \label{diff. incl. for Dell} \\
  & \norm{w(t)}{} \le \pV(\C, \clint{a,b})/(b-a) \qquad \text{for $\D\ell_\C$-a.e. $t \in \clsxint{a,b}$}. 
      \label{|v| < 1-classical}
\end{align}
\end{Lem}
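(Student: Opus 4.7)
The plan is to transfer the generic decomposition $\D y = v\mu$ produced by Theorem~\ref{T:sweeping process existence} to the canonical one $\D y = w\D\ell_\C$, using that the total-variation measure of $\D\C$ and $\D\ell_\C$ are proportional and invoking the Radon--Nikodym property of $\H$; the differential inclusion will then be passed across the change of reference measure via the integral characterization in Lemma~\ref{L:int. charact.}.

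First I would handle the trivial case $\pV(\C,\clint{a,b}) = 0$: then $\ell_\C \equiv a$, so $\D\ell_\C = 0$, while the Moreau estimate~\eqref{estimate for var(y)-classical} forces $\D y = 0$, and the conclusion holds with $w \equiv 0$. Assuming instead $\pV(\C,\clint{a,b}) \neq 0$, I would observe that by definition of $\ell_\C$ in Proposition~\ref{D:reparametrization},
\[
  \D \ell_\C = \frac{b-a}{\pV(\C,\clint{a,b})}\,\vartot{\D\C},
\]
since both positive measures put mass $(b-a)\pV(\C,\clsxint{a,t])/\pV(\C,\clint{a,b})$ on initial segments $\clsxint{a,t}$. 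The Moreau variation bound \eqref{estimate for var(y)-classical} yields $\vartot{\D y}(J) \le \vartot{\D \C}(J)$ on every subinterval $J$, and hence (by standard regularity of Borel measures) on every $B \in \borel(\clint{a,b})$; in particular $\D y$ is $\D\ell_\C$-absolutely continuous, with total-variation density bounded by $\pV(\C,\clint{a,b})/(b-a)$.

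Next I would apply the Radon--Nikodym theorem: since $\H$ is a Hilbert space it has the Radon--Nikodym property, so there exists a unique (up to $\D\ell_\C$-null sets) $w \in \L^1(\D\ell_\C;\H)$ with $\D y = w\D\ell_\C$. Because $\vartot{w\D\ell_\C} = \norm{w(\cdot)}{}\,\D\ell_\C$ and $\vartot{\D y} \le \frac{\pV(\C,\clint{a,b})}{b-a}\,\D\ell_\C$, comparing densities yields $\norm{w(t)}{} \le \pV(\C,\clint{a,b})/(b-a)$ for $\D\ell_\C$-a.e. $t$, which is \eqref{|v| < 1-classical}.

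Finally, to get \eqref{diff. incl. for Dell}, I would invoke Lemma~\ref{L:int. charact.} twice. On $B = \opint{a,b}$, the pair $(v,\mu)$ satisfies condition~(i) by hypothesis, hence condition~(ii): for every $\mu$-measurable selection $z(t) \in \C(t)$,
\[
  0 \ge \int_B \duality{y(t)-z(t)}{v(t)}\de\mu(t) = \int_B \duality{y(t)-z(t)}{\de\D y(t)} = \int_B \duality{y(t)-z(t)}{w(t)}\de\D\ell_\C(t),
\]
where the two rewritings use $\D y = v\mu = w\D\ell_\C$ together with the identity $\int \duality{f}{\de(g\nu)} = \int\duality{f}{g}\de\nu$ recalled in Section~\ref{S:Preliminaries}; here $f = y-z$ is bounded (after truncating to a subinterval where both $y$ and a fixed continuous selection $z$ are bounded, then localizing) and $\D\ell_\C$-measurable by right-continuity. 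Thus the pair $(w,\D\ell_\C)$ satisfies condition~(ii) of Lemma~\ref{L:int. charact.} on $B$, and the converse implication of that lemma gives $-w(t) \in N_{\C(t)}(y(t))$ for $\D\ell_\C$-a.e. $t \in \opint{a,b}$.

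The only nontrivial technical point is the Radon--Nikodym step for vector measures, which is why it matters that $\H$ is Hilbert (RNP); alternatively one could construct $w$ directly as a pointwise weak limit of difference quotients in the Vitali-differentiation sense, as is implicit in the proof of Lemma~\ref{L:int. charact.}. Everything else is a careful bookkeeping of the two decompositions of $\D y$ and the fact that the variational inequality formulation depends only on $\D y$ itself, not on the chosen reference measure.
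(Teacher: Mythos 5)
Your proposal is correct and its skeleton is the one the paper uses: obtain $(v,\mu)$ from Theorem~\ref{T:sweeping process existence}, deduce from \eqref{estimate for var(y)-classical} that $\D y$ is $\D\ell_\C$-absolutely continuous, apply the vectorial Radon--Nikodym theorem to get $w$, and then pass the differential inclusion through Lemma~\ref{L:int. charact.} in both directions via the chain $\int\duality{y-z}{w}\de\D\ell_\C=\int\duality{y-z}{\de\D y}=\int\duality{y-z}{v}\de\mu\le 0$ --- this is word for word the paper's argument for \eqref{diff. incl. for Dell}. The one place where you genuinely diverge is the norm bound \eqref{|v| < 1-classical}: the paper differentiates $\vartot{\D y}$ against $\D\ell_\C$ at $\D\ell_\C$-Lebesgue points using Federer's Vitali-relation machinery (the same \cite[Corollary 2.9.9]{Fed69} device as in Lemma~\ref{L:int. charact.}), whereas you first establish the measure inequality $\vartot{\D y}\le \frac{\pV(\C,\clint{a,b})}{b-a}\,\D\ell_\C$ on all Borel sets (from intervals plus regularity) and then compare densities; your route is more elementary and also dispenses with the paper's final remark about $\D\ell_\C(\{a\})=0$, since it yields the bound $\D\ell_\C$-a.e.\ on all of $\clsxint{a,b}$ at once, and your separate treatment of the degenerate case $\pV(\C,\clint{a,b})=0$ is a harmless addition. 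Two cosmetic caveats: the symbol $\vartot{\D\C}$ is not literally defined, since $\C$ takes values in the metric space $(\Conv_\H,\d_\hausd)$ and has no differential measure --- what you mean, and should say, is the Lebesgue--Stieltjes measure of the variation function $t\longmapsto\pV(\C,\clint{a,t})$, for which your identity with $\D\ell_\C$ is indeed immediate from the definition of $\ell_\C$; and the truncation remark about $y-z$ being bounded is addressing an integrability point that the paper itself does not belabor, so it is fine as stated.
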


\begin{proof}
The existence of a measure $\mu : \borel(\clsxint{a,b}) \function \clsxint{0,\infty}$ and a function $v \in \L^1(\mu;\H)$ such that \eqref{constr-classical}--\eqref{i.c.-classical} hold is guaranteed by Theorem 
\ref{T:sweeping process existence}, which, by estimate \eqref{estimate for var(y)-classical}, also implies that $\D y$ is absolutely continuous with respect to $\D \ell_\C$ and the existence and uniqueness of $w$ is a consequence of the vectorial Radon-Nikodym theorem \cite[Corollary 4.2, Section VII.4, p. 204]{Lan93}. 
From Lemma \ref{L:int. charact.} we infer that that for every $\mu$-measurable function 
$z : \clsxint{a,b} \function \H$ such that $z(t) \in \C(t)$ for every $t \in \clsxint{a,b}$ we have that
\begin{align}
  & \int_{\clsxint{a,b}} \duality{y(t) - z(t)}{w(t)} \de \D\ell_\C(t) 
      =  \int_{\clsxint{a,b}} \duality{y(t) - z(t)}{\de (w\D\ell_\C)(t)} \notag \\
  &  = \int_{\clsxint{a,b}} \duality{y(t) - z(t)}{\de \D y(t)} 
      = \int_{\clsxint{a,b}} \duality{y(t) - z(t)}{\de (v\mu)(t)} \notag \\
  &  = \int_{\clsxint{a,b}} \duality{y(t) - z(t)}{v(t)} \de \mu(t)  \le 0, \notag
\end{align}
therefore using again Lemma \ref{L:int. charact.} we get \eqref{diff. incl. for Dell}. Finally using 
\cite[Corollary 2.9.9., p. 156]{Fed69} as in formula \eqref{Dl-Lebesgue points} in the proof of Lemma \ref{L:int. charact.}, and exploiting estimate \eqref{estimate for var(y)-classical}, we get that for $\D\ell_\C$-a.e. $t \in \opint{a,b}$ we have
\begin{align}
  \norm{w(t)}{} 
    & = \lim_{h \searrow 0} \frac{1}{\D\ell_\C(\clint{t-h,t+h})} \int_{\clint{t-h,t+h}} \norm{w(\tau)}{} \de \D\ell_\C(\tau) \notag \\
    & = \lim_{h \searrow 0} \frac{\vartot{\D y}(\clint{t-h,t+h})}{\D\ell_\C(\clint{t-h,t+h})}  
     = \lim_{h \searrow 0} \frac{\pV(\C, \clint{a,b})\pV(y, \clint{t-h,t+h})}{(b-a)\pV(\C, \clint{t-h,t+h})} 
         \le \frac{\pV(\C, \clint{a,b})}{b-a} , \notag
\end{align}
and the lemma is completely proved, since $\D\ell_\C(\{0\}) = 0$.
\end{proof}

\begin{Rem}
The previous Lemma can also be proved by using the representation formula \cite[Formula (69)]{Rec16}.
\end{Rem}

Now we can prove our main result in the particular case when the behaviour of the solution is prescribed on a finite number of points.

\begin{Lem}\label{L:case S finite}
Assume that $-\infty < a < b < \infty$, $\C \in \BV^\r(\clsxint{a,b};\Conv_\H)$, $y_0 \in \C(a)$, $S \subseteq \opint{a,b}$, and that for every $t \in S$ we are given $g_t : \H \function \C(t)$ such that 
\begin{equation}\label{sum gt - I finite}
\sum_{t \in S} \norm{g_t - \Id}{\infty,\C(t-)} < \infty.
\end{equation}
If $F \subseteq S$ is a finite set, then there exists a unique $y \in \BV^\r(\clsxint{a,b};\H)$ such that there are a measure 
$\mu : \borel(\clsxint{a,b}) \function \clsxint{0,\infty}$ and a function $v \in \L^1(\mu;\H)$ satisfying
\begin{alignat}{3}
  & y(t) \in \C(t) & \quad &  \forall t \in \clsxint{a,b}, \label{constraint_S finite} \\
  & \D y = v \mu, & \label{Dy decomposition-finite} \\
  & -v(t) \in N_{\C(t)}(y(t)) & \quad & \text{for $\mu$-a.e. $t \in \clsxint{a,b} \setmeno F$}, \label{diff incl_S finite} \\
  & y(t) = g_t(y(t-)) & \quad & \forall t \in F, \label{jump conditions_S finite} \\
  & y(a)  = y_0. & \label{initial condition S finite}
\end{alignat}
Moreover, if $a \le s < t \le b$, we have
\begin{equation}\label{estimate for Var(y)_S finite}
  \pV(y,\clint{s,t})  \le \pV(\C,\clint{s,t}) + 
  \sum_{r \in F \cap \clint{s,t}} \left(\norm{g_r - \Id}{\infty,\C(r-)} - \d_\hausd(\C(r-),\C(r))\right),
\end{equation}
one can take
\begin{equation}\label{formula fo mu-finite}
  \mu = \D\ell_\C + \sum_{t \in S} \norm{g_t - \Id}{\infty, \C(t-)} \delta_{t},
\end{equation}
and for such $\mu$ the function $v$ satisfying \eqref{Dy decomposition-finite}-\eqref{diff incl_S finite} is unique up to
$\mu$-equivalence and we have
\begin{equation}\label{|v| < 1-finite}
  \norm{v(t)}{} \le \max\{1, \pV(\C,\clint{a,b})/(b-a)\} \qquad \text{for $\mu$-a.e. $t \in \clsxint{a,b}$}.
\end{equation}
\end{Lem}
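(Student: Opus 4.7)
The plan is to construct $y$ by alternating Moreau's classical sweeping process on intervals free of prescribed jumps with explicit applications of the maps $g_t$ at the finite set $F$. Writing $F = \{t_1 < \cdots < t_n\}$ and setting $t_0 := a$, $t_{n+1} := b$, I define $y$ inductively: put $\zeta_0 := y_0$; for $i = 0,\ldots,n$, apply Theorem \ref{T:sweeping process existence} to $\C|_{\clsxint{t_i,t_{i+1}}}$ with initial datum $\zeta_i \in \C(t_i)$ to obtain a piece $y_i \in \BV^\r(\clsxint{t_i,t_{i+1}};\H)$; then, for $i < n$, set $\zeta_{i+1} := g_{t_{i+1}}(y_i(t_{i+1}-))$, which lies in $\C(t_{i+1})$ provided $y_i(t_{i+1}-) \in \C(t_{i+1}-)$. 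The latter follows from $y_i(\tau) \in \C(\tau)$ on $\clsxint{t_i,t_{i+1}}$, closedness of $\C(t_{i+1}-)$, and $\d_\hausd(\C(\tau),\C(t_{i+1}-)) \to 0$ as $\tau \nearrow t_{i+1}$. Concatenation yields $y \in \BV^\r(\clsxint{a,b};\H)$ with jumps only at points of $F$, satisfying $y(a) = y_0$, $y(t) \in \C(t)$, and $y(t) = g_t(y(t-))$ for $t \in F$.

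For the measure $\mu$ and density $v$, I assemble the pieces. On each $\opint{t_i,t_{i+1}}$, Lemma \ref{L:mu=Dell} applied to the restricted sweeping process yields a density $w_i$ with $-w_i \in N_\C(y)$; since the local and global arc-length normalizations differ only by a constant factor on $\opint{t_i,t_{i+1}}$, rescaling produces $w \in \L^1(\D\ell_\C;\H)$ with $\D y = w\,\D\ell_\C$ there and $\norm{w}{} \le \pV(\C,\clint{a,b})/(b-a)$. At each $t \in F$, the jump $y(t) - y(t-) = g_t(y(t-)) - y(t-)$ is absorbed by a Dirac atom: with $\mu(\{t\}) = \norm{g_t - \Id}{\infty,\C(t-)}$, the density $v(t) := (y(t) - y(t-))/\norm{g_t - \Id}{\infty,\C(t-)}$ has $\norm{v(t)}{} \le 1$, and \eqref{diff incl_S finite} imposes no condition on $F$. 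For $t \in S \setmeno F$, $y$ is continuous so $\D y(\{t\}) = 0$, and setting $v(t) := 0$ trivially satisfies $-v(t) = 0 \in N_{\C(t)}(y(t))$. This produces the formula \eqref{formula fo mu-finite} and the bound \eqref{|v| < 1-finite}.

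Uniqueness of $y$ follows by induction from the classical continuous-dependence estimate \eqref{moreau cont. dep.}. If $y_1,y_2$ are two solutions, then on $\clsxint{a,t_1}$ each is a classical sweeping process solution for $\C$ with initial value $y_0$, since \eqref{diff incl_S finite} holds $\mu_j$-a.e.\ on $\clsxint{a,t_1} \subseteq \clsxint{a,b} \setmeno F$; hence $y_1 \equiv y_2$ on $\clsxint{a,t_1}$ by \eqref{moreau cont. dep.}. Then $y_1(t_1) = g_{t_1}(y_1(t_1-)) = g_{t_1}(y_2(t_1-)) = y_2(t_1)$ supplies equal initial data for the next subinterval, and iteration concludes. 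The variation estimate \eqref{estimate for Var(y)_S finite} follows by summing $\pV(y,\clint{t_i,t_{i+1}}) \le \pV(\C,\clint{t_i,t_{i+1}})$ from \eqref{estimate for var(y)-classical} over the subintervals cut out by $F \cap \clint{s,t}$, and at each $t_i \in F \cap \clint{s,t}$ replacing the intrinsic Moreau jump of size at most $\d_\hausd(\C(t_i-),\C(t_i))$ (already embedded in $\pV(\C,\clint{s,t})$) by the prescribed one of size at most $\norm{g_{t_i} - \Id}{\infty,\C(t_i-)}$.

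The main technical obstacle I anticipate is the last bookkeeping step: one must decompose $\pV(y,\clint{s,t})$ as a sum of variations over the subintervals determined by $F \cap \clint{s,t}$ using the correct one-sided limits, and verify that swapping jumps at each $t_i$ contributes precisely the discrepancy $\norm{g_{t_i}-\Id}{\infty,\C(t_i-)} - \d_\hausd(\C(t_i-),\C(t_i))$, which may be negative when the prescribed jump is shorter than the canonical projection jump. A related subtlety is reconciling the global versus local normalizations of $\ell_\C$ when assembling a single $w \in \L^1(\D\ell_\C;\H)$ from the pieces provided by Lemma \ref{L:mu=Dell}.
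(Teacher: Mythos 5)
Your construction is essentially the paper's own proof: split $\clsxint{a,b}$ at the finite set $F$, run Moreau's Theorem \ref{T:sweeping process existence} (together with Lemma \ref{L:mu=Dell} and the rescaling between local and global arc-length parametrizations) on each subinterval, glue with $g_t$ at the points of $F$, take $\mu$ as in \eqref{formula fo mu-finite}, obtain uniqueness by induction from \eqref{moreau cont. dep.}, and get \eqref{estimate for Var(y)_S finite} by exactly the jump-swapping bookkeeping you describe. Two small points need adjusting, both caused by the fact that $\C$ itself may jump at points of $S$ (nothing excludes $S \cap \discont(\C) \neq \void$): at $t \in F$ the atom of $\mu$ is $\mu(\{t\}) = \D\ell_\C(\{t\}) + \norm{g_t - \Id}{\infty,\C(t-)}$, not just $\norm{g_t - \Id}{\infty,\C(t-)}$, so the correct density is $v(t) = (y(t)-y(t-))/\mu(\{t\})$ (the bound $\norm{v(t)}{} \le 1$ survives, since the denominator only increases); and at $t \in S \setmeno F$ with $\C$ discontinuous, $y$ is \emph{not} continuous --- the Moreau piece jumps to $\Proj_{\C(t)}(y(t-))$ --- so you cannot simply set $v(t) = 0$; rather $v(t)$ equals the classical density multiplied by the positive factor $\D\ell_\C(\{t\})/\mu(\{t\})$, and $-v(t) \in N_{\C(t)}(y(t))$ still holds because the normal cone is a cone. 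With these corrections your argument coincides with the paper's.
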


\begin{proof}
It is not restrictive to assume that $\norm{g_t - \Id}{\infty,\C(t-)} \neq 0$ for every $t \in S$. Let us set $t_0 := a$, 
$t_n := b$ and suppose that $F = \{t_1, \ldots, t_{n-1}\}$ for some $n \in \en$ with 
$a = t_0 < t_1 < \cdots < t_{n-1} < t_n = b$. 
Let us call $\C_j$ the restriction of $\C$ to the interval $[t_{j-1},t_j]$. 
Observe that we have
\begin{align}
  \ell_{\C_j}(t) 
    & = t_{j-1} + \frac{t_j-t_{j-1}}{\pV(\C,\clint{t_{j-1},t_j})}\pV(\C,\clint{t_{j-1},t}) \notag \\
    & = t_{j-1} + \frac{t_j-t_{j-1}}{\pV(\C,\clint{t_{j-1},t_j})}\dfrac{\pV(\C,\clint{a,b})}{b-a} \left(\ell_\C(t) - \ell_\C(t_{j-1})\right) \notag
\end{align}
therefore 
\begin{equation}\label{Dellj = c Dell}
\D\ell_{\C_j} = \frac{t_j-t_{j-1}}{\pV(\C,\clint{t_{j-1},t_j})}\dfrac{\pV(\C,\clint{a,b})}{b-a} \D\ell_\C
\end{equation}
and by applying Theorem \ref{T:sweeping process existence} and Lemma \ref{L:mu=Dell} we get that for every 
$j \in \{1, \ldots, n\}$ there is a unique $y^j \in \BV^\r(\clsxint{t_{j-1}, t_j};\H)$ and a unique
$v^j \in \L^1(\D \ell_{\C}, \clsxint{t_{j-1}, t_j};\H)$ such that
\begin{align}
  & y^j(t) \in \C(t)   \quad   \forall t \in \clsxint{t_{j-1}, t_j}, \\
  & \D y^j = v^j \D \ell_{\C}   \quad   \text{on $\borel(\clsxint{t_{j-1}, t_j})$}, \quad \label{Dyj = vj muj} \\
  & -v^j(t) \in N_{\C(t)}(y^j(t))    \quad   \text{for $\D\ell_{\C}$-a.e. $t \in \clsxint{t_{j-1},t_j}$}, \label{diff incl_S finite loc} \\
  & y^j(t_{j-1})  =  \begin{cases}
                             y_0 & \text{if $j=1$} \\
                             g_{t_{j-1}}(y^{j-1}(t_{j-1}-)) & \text{if $j \in \{2, \ldots, n\}$}
                           \end{cases},    
\end{align}
and, using \eqref{|v| < 1-classical} and \eqref{Dellj = c Dell}, we have 
\begin{equation}\label{|v^j| < 1}
  \norm{v^j(t)}{} \le 
   \frac{t_j-t_{j-1}}{\pV(\C,\clint{t_{j-1},t_j})}\dfrac{\pV(f,\clint{a,b})}{b-a}\frac{\pV(\C, \clint{t_{j-1},t_j})}{t_j-t_{j-1}} =
  \dfrac{\pV(f,\clint{a,b})}{b-a}
 \quad \text{for $\D\ell_{\C}$-a.e. $t \in \opint{t_{j-1},t_j}$}.
\end{equation}
Now we define $y : \clsxint{a,b} \function \H$ by setting
\begin{align}
  y(t) := \sum_{j=1}^n \indicator_{\clsxint{t_{j-1},t_j}}(t)y^j(t) 
\end{align}
and $\mu : \borel(\clsxint{a,b}) \function \clsxint{0,\infty}$ by \eqref{formula fo mu-finite}. Observe that $y$ is right continuous and satisfies \eqref{constraint_S finite}, \eqref{jump conditions_S finite}, and  \eqref{initial condition S finite}. Moreover $\mu$ is a positive measure, $\mu(\{0\}) = 0$, and thanks to right continuity of $y$ we have that
\begin{align}
  \pV(y,\clint{s,t}) 
    & =  \sum_{j=1}^{n} \vartot{\D y}(\clsxint{t_{j-1},t_j} \cap\clint{s,t})  \notag \\
    & =  \sum_{j=1}^{n}  \pV(y,\opint{t_{j-1},t_j} \cap\clint{s,t}) + 
            \sum_{t_j \in  F \cap \clint{s,t}} \norm{y(t_{j}) - y(t_{j}-)}{} \notag \\
    & =  \sum_{j=1}^{n} \pV(y^j,\opint{t_{j-1},t_j}\cap\clint{s,t}) + 
            \sum_{F \cap \clint{s,t}}\norm{g_{t_j}(y^{j}(t_{j}-)) - y^{j}(t_{j}-)}{} \notag \\
    & \le \pV(\C,\clint{s,t}) + \sum_{t \in F \cap\clint{s,t}} \norm{g_t - \Id}{\infty,\C(t-)}, \notag
\end{align}
hence $y$ has bounded variation and \eqref{estimate for Var(y)_S finite} holds. Now we consider a set 
$B \in \borel(\clsxint{a,b})$ such that $\mu(B) = 0$. For every $j \in \{1,\ldots,n\}$ we have 
$0 = \mu(B \cap \opint{t_{j-1},t_j}) = \D\ell_\C(B \cap \opint{t_{j-1},t_j})$, thus from \eqref{Dyj = vj muj} we infer that 
$\D y^j(B \cap \opint{t_{j-1},t_j}) = 0$, and, since $y = y^j$ on $\opint{t_{j-1},t_j}$, it follows that $\D y = \D y^{j}$ on 
$\borel(\opint{t_{j-1},t_j})$, so that $\D y(B \cap \opint{t_{j-1},t_j}) = 0$. We also have that 
$0 = \mu(B \cap \{t_j\}) = \norm{g_{t_j} - \Id}{\infty,\C(t_j-)}\delta_{t_j}(B)$ for every $j \in \{1,\ldots,n-1\}$, thus 
$\D y(\{t_j\}) = y(t_j) - y(t_j-) = g_{t_j}(y(t_j-)) - y(t_j-)) = 0$. Therefore we infer that $\D y(B) = 0$ whenever 
$B \in \borel(\clsxint{a,b})$ and $\mu(B) = 0$, so that $\D y$ is $\mu$-absolutely continuous and by the vectorial 
Radon-Nikodym theorem \cite[Corollary 4.2, Section VII.4, p. 204]{Lan93} there exists a unique (up to 
$\mu$-equivalence) function $v \in \L^1(\mu;\H)$ such that $\D y = v \mu$. It follows that on $\borel(\opint{t_{j-1},t_j})$ we have $\D y^j = \D y = v \mu = v \D\ell_\C$, thus $v = v^j$ for $\D\ell_\C$-a.e. $t \in \opint{t_{j-1},t_j}$ and $v$ satisfies \eqref{diff incl_S finite} thanks to \eqref{diff incl_S finite loc}. Let us also observe that for every $t \in F$ we have
\[
  y(t) - y(t-) = \D y(\{t\}) = \int_{\{t\}}v \de \mu = \left(\D\ell_\C(\{t\}) + \norm{g_t - \Id}{\infty,\C(t-)}\right) v(t)
  \qquad \forall t \in F
\]
therefore
\begin{align}
  \norm{v(t)}{} 
     = \frac{\norm{y(t) - y(t-)}{}}{\D\ell_\C(\{t\}) + \norm{g_t - \Id}{\infty,\C(t-)}}
    \le \frac{\norm{g_t(y(t-)) - y(t-)}{}}{\norm{g_t - \Id}{\infty,\C(t-)}} \le 1 \qquad \forall t \in F,
\end{align}
thus formula \eqref{|v| < 1-finite} follows from \eqref{|v^j| < 1}. The uniqueness of $y$ is a consequence of its construction.
\end{proof}

We will need the following weak compactness theorem for measures \cite[Theorem 5, p. 105]{{DieUhl77}}, which we state in a form which is suitable to our purposes.
 
\begin{Thm}\label{measure dunford pettis}
Let $I \subseteq \ar$ be an interval and let $M$ be a subset of the vector space of measures $\nu : \borel(I) \function \H$ with bounded variation endowed with the norm $\norm{\nu}{} := \vartot{\nu}(I)$. Assume that $M$ is bounded. Then $M$ is weakly sequentially precompact if and only if  there is a bounded positive measure 
$\mu : \borel(I) \function \clsxint{0,\infty}$ such that 
\begin{equation}
  \forall \eps > 0 \ \exists \delta> 0 \ \quad :\quad 
  \left( B \in \borel(I),\ \mu(B) < \delta\  \Longrightarrow \ \sup_{\nu \in M} \vartot{\nu}(B) < \eps \right).
\end{equation}
\end{Thm}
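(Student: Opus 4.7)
The proof naturally splits into two directions: (sufficiency) uniform absolute continuity with respect to some $\mu$ implies weak sequential precompactness; (necessity) weak sequential precompactness implies the existence of such a control measure. The plan is to reduce each direction to a classical tool: the vector-valued Dunford--Pettis theorem in $\L^1(\mu;\H)$ for sufficiency, and a Vitali--Hahn--Saks-type argument for necessity.

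For sufficiency, let the positive measure $\mu$ be given. Letting $\eps \searrow 0$ in the hypothesis shows that every $\nu \in M$ is $\mu$-absolutely continuous. Since $\H$ is Hilbert, it is reflexive and hence has the Radon--Nikodym property, so each $\nu \in M$ admits a density $f_\nu \in \L^1(\mu;\H)$ with $\nu = f_\nu \mu$. The identity $\vartot{\nu}(B) = \int_B \norm{f_\nu}{} \de \mu$ combined with the boundedness of $M$ gives $\sup_{\nu \in M} \norm{f_\nu}{\L^1(\mu;\H)} < \infty$, while the $(\eps,\delta)$ hypothesis translates verbatim into uniform integrability of the family $\{f_\nu\}_{\nu \in M}$ in $\L^1(\mu;\H)$. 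The Dunford--Pettis theorem for Bochner spaces valued in a reflexive Banach space then extracts a weakly convergent subsequence $f_{\nu_n} \convergedeb f$. Testing against indicator functions $\indicator_B$ for $B \in \borel(I)$ yields $\nu_n(B) = \int_B f_{\nu_n} \de \mu \convergedeb \int_B f \de \mu = (f\mu)(B)$ weakly in $\H$, which is exactly weak convergence of the vector measures $\nu_n$ to $f\mu$.

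For necessity, assume $M$ is bounded and weakly sequentially precompact. First construct a single control measure by selecting a countable family $\{\nu_k\} \subseteq M$ and setting $\mu := \sum_k 2^{-k} \vartot{\nu_k}/(1 + \vartot{\nu_k}(I))$, the $\nu_k$ being chosen via a separability argument on the weakly precompact set $M$ so that every $\nu \in M$ is $\mu$-absolutely continuous. Then prove the uniform $(\eps,\delta)$ property by contradiction: if it fails, there exist $\eps > 0$, measures $\nu_n \in M$, and sets $B_n \in \borel(I)$ with $\mu(B_n) \to 0$ but $\vartot{\nu_n}(B_n) \ge \eps$. Extract a weakly convergent subsequence $\nu_n \convergedeb \nu_\infty$; then the classical Vitali--Hahn--Saks theorem, applied to the Radon--Nikodym densities $f_n$ of $\nu_n$ with respect to $\mu$, upgrades the pointwise $\mu$-absolute continuity of each $\nu_n$ into uniform $\mu$-absolute continuity of the whole sequence, contradicting the choice of the $B_n$.

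The main obstacle is the necessity direction: both the construction of a $\mu$ that simultaneously dominates every $\nu \in M$ and the upgrade from pointwise to uniform absolute continuity rely on the abstract Vitali--Hahn--Saks theorem, whose proof uses a Baire category argument in the variation-norm space of bounded $\H$-valued measures. Once this qualitative input is available, the rest of the proof is a direct dictionary between vector measures of bounded variation and their Radon--Nikodym densities, and the only genuinely analytic step is the appeal to the Dunford--Pettis characterization of weak compactness in $\L^1(\mu;\H)$.
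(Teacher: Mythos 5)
Your sufficiency half is, in substance, the route the paper itself intends: the paper gives no proof of this theorem, but quotes it from Diestel--Uhl (``Vector Measures'', Theorem~5, p.~105) and remarks that, once $\mu$ is given, the isometric identification of the $\mu$-absolutely continuous measures of bounded variation with $\L^1(\mu;\H)$ (legitimate because $\H$ is reflexive, hence has the Radon--Nikodym property) reduces everything to the sequential Dunford--Pettis theorem in $\L^1(\mu;\H)$; the paper also notes that only this implication is used later. One step of your version still needs repair: the claim that $\nu_n(B)\convergedeb (f\mu)(B)$ for every $B\in\borel(I)$ ``is exactly weak convergence of the vector measures'' is false. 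Weak convergence here means convergence against the full dual of the variation-normed space of measures (the paper explicitly warns against conflating the various weak notions), and setwise weak convergence is strictly weaker in infinite dimensions (see the example below). The conclusion you want is immediate anyway: $h\longmapsto h\mu$ is an isometric linear map of $\L^1(\mu;\H)$ into the space of measures, bounded linear maps are weak--weak continuous, so $f_{\nu_n}\convergedeb f$ in $\L^1(\mu;\H)$ already forces $\nu_n\convergedeb f\mu$ weakly in the space of measures.

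The necessity half, by contrast, has genuine gaps. First, weakly sequentially precompact sets need not be norm separable, so the ``separability argument'' producing your countable family $\{\nu_k\}$ is unavailable: if $(e_\gamma)_{\gamma\in\Gamma}$ is an uncountable orthonormal system in $\H$, the family $\{e_\gamma\delta_{1/2}\ :\ \gamma\in\Gamma\}$ on $I=\clint{0,1}$ is weakly sequentially precompact (a sequence with pairwise distinct indices converges weakly to $0$, because any bounded functional on the space of measures acts on the isometric copy $\{h\delta_{1/2}\ :\ h\in\H\}$ of $\H$ as $h\longmapsto\duality{h}{w}$ for some $w\in\H$), yet any two of its elements are at variation distance $\sqrt{2}$; the classical construction of a control measure for a weakly compact family is an exhaustion/maximality argument, not a separability one. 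Second, and more fundamentally, the Vitali--Hahn--Saks step cannot deliver the conclusion. VHS-type theorems take setwise convergence as input and control the values $\norm{\nu_n(B)}{}$ (equivalently the scalarizations $\duality{\nu_n(B)}{h}$), whereas the statement to be proved concerns the variations $\vartot{\nu_n}(B)$; in the scalar case these are comparable via $|\lambda|(B)\le 2\sup_{B'\subseteq B}|\lambda(B')|$, but in infinite-dimensional $\H$ they are not. Concretely, take $I=\clint{0,1}$, $\mu=\leb^1$, orthonormal vectors $(e_{n,k})_{k\le n}$, $J_{n,k}:=\clsxint{(k-1)/n^{2},\,k/n^{2}}$, $f_n:=n\sum_{k=1}^{n}e_{n,k}\indicator_{J_{n,k}}$, and $\nu_n:=f_n\leb^1$. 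Each $\nu_n$ is $\leb^1$-absolutely continuous with $\vartot{\nu_n}(I)=1$, and for every Borel set $B$ one has $\norm{\nu_n(B)}{}^{2}=n^{2}\sum_{k}\leb^1(B\cap J_{n,k})^{2}\le n^{2}\cdot n^{-2}\cdot n^{-1}=n^{-1}$, so $\nu_n(B)\to 0$ in norm uniformly over $B\in\borel(I)$; nevertheless $\vartot{\nu_n}(\clsxint{0,1/n})=1$ while $\leb^1(\clsxint{0,1/n})\to 0$, so no uniform variation-absolute-continuity holds. Hence setwise convergence --- which is all that VHS uses, and all that weak convergence hands you for free --- is compatible with failure of the desired conclusion; to close the argument one needs the converse half of Dunford--Pettis in $\L^1(\mu;\H)$ (weakly convergent sequences have uniformly integrable norms), i.e.\ a result of the same depth as the theorem being proved, and this is exactly the part of the Diestel--Uhl proof that your sketch does not reach. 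The same example also witnesses the conflation flagged above: these $\nu_n$ converge setwise, uniformly over Borel sets, yet no subsequence converges weakly, precisely because weak convergence would force uniform integrability of $\norm{f_n}{}$.
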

It is worthwhile to mention that the above theorem is concerned with the notion of weak convergence of measures (in duality with the space of linear continuous functionals on the space of measures) and not with weak-* convergence of measures (in duality with continuous functions). Anyway, if one does not want an equivalence but only an implication, it is clearly true that the same condition above also provides weakly-* sequential precompactness of $M$.

Theorem \ref{measure dunford pettis} is stated in \cite[Theorem 5, p. 105]{DieUhl77} as a topological precompactness result. An inspection in the proof easily shows that this is actually a sequential precompactness theorem, since an isometric isomorphism reduces it to the well-known Dunford-Pettis weak sequential precompactness theorem in 
$\L^1(\mu;\H)$ (see, e.g., \cite[Theorem 1, p. 101]{DieUhl77}).

We are now in position to prove the following theorem which immediately implies our main result 
Theorem \ref{main thm}.

\begin{Thm}\label{main thm-text}
Assume that $-\infty < a < b < \infty$, $\C \in \BV^\r(\clsxint{a,b};\Conv_\H)$, $y_0 \in \C(a)$, $S \subseteq \opint{a,b}$, and that for every $t \in S$ we are given $g_t : \C(t-) \function \C(t)$ such that $\Lipcost(g_t) \le 1$ and
\begin{equation}\label{sum gt}
  \sum_{t \in S} \norm{g_t - \Id}{\infty,\C(t-)} < \infty.
\end{equation}
Then there exists a unique $y \in \BV^\r(\clsxint{a,b};\H)$ such that there is a measure 
$\mu : \borel(\clsxint{a,b}) \function \clsxint{0,\infty}$ and a function $v \in \L^1(\mu;\H)$ such that
\begin{alignat}{3}
  & y(t) \in \C(t) & \quad & \forall t \in \clsxint{a,b}, \label{constr. [0,T[} \\
  & \D y = v \mu, & \label{Dy =vmu [0,T[} \\
  & -v(t) \in N_{\C(t)}(y(t)) & \quad & \text{for $\mu$-a.e. $t \in \clsxint{a,b} \setmeno S$}, \label{diff. incl. [0,T[} \\
  & y(t) = g_t(y(t-)) & \quad & \forall t \in S, \label{cond. on S, [0,T[} \\
  & y(a)  = y_0. & \label{i.c. [0,T[}
\end{alignat}
Moreover \eqref{estimate for Var(y)_S gen} holds whenever $a \le s < t \le b$.
Finally the function $t \longmapsto \norm{y_1(t) - y_2(t)}{}^2$ is nonincreasing whenever
$y_{0,j} \in \C(a)$, $j = 1, 2$, and $y_j$ is the only function such that there is a measure 
$\mu_j : \borel(\clsxint{a,b}) \function \clsxint{0,\infty}$ and a function $v_j \in \L^1(\mu;\H)$ for which
\eqref{constr. [0,T[}-\eqref{i.c. [0,T[} hold with $y, v, \mu, y_0$ replaced respectively by $y_j, v_j, \mu_j, y_{0,j}$.
\end{Thm}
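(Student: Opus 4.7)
The plan is to reduce to the finite-jump case handled by Lemma~\ref{L:case S finite} via approximation, establish uniform convergence of the approximations through a Cauchy estimate, and then verify each required condition for the limit. First set $S_0 := \{t \in S : \norm{g_t - \Id}{\infty,\C(t-)} > 0\}$, which is at most countable by~\eqref{sum gt}; for $t \in S \setminus S_0$ the hypothesis $g_t = \Id$ on $\C(t-)$ forces $\C(t-) \subseteq \C(t)$, so such points will be handled automatically since the classical Moreau projection $\Proj_{\C(t)}$ already fixes every element of $\C(t-)$. Enumerate $S_0 = \{s_k\}$, set $F_n := \{s_1,\dots,s_n\}$, extend each $g_t$ to $\H$ via $g_t \circ \Proj_{\C(t-)}$ (preserving the $1$-Lipschitz constant and the sup norm on $\C(t-)$), and invoke Lemma~\ref{L:case S finite} to produce $(y_n,\mu_n,v_n)$ with uniform variation bound $\pV(y_n,\clint{a,b}) \le \pV(\C,\clint{a,b}) + \sum_{r \in S_0}\norm{g_r-\Id}{\infty,\C(r-)}$ and velocity bound $\norm{v_n(\cdot)}{} \le M := \max\{1,\pV(\C,\clint{a,b})/(b-a)\}$ $\mu_n$-a.e.

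The heart of the existence argument is a Cauchy estimate in sup-norm. For $m > n$, order $F_m = \{r_1 < \dots < r_m\}$. On each open subinterval $(r_{j-1},r_j)$ both $y_n$ and $y_m$ solve a classical Moreau sweeping process driven by $\C$, so contraction~\eqref{moreau cont. dep.} gives that $t \mapsto \norm{y_n(t)-y_m(t)}{}^2$ is non-increasing there. At $r_j \in F_n$ both solutions apply the same $1$-Lipschitz map $g_{r_j}$, so the distance cannot increase. At $r_j \in F_m \setminus F_n$ only $y_m$ applies $g_{r_j}$, while $y_n$ executes the classical projection onto $\C(r_j)$; the triangle inequality, $\Lipcost(g_{r_j}) \le 1$, and $y_n(r_j-) \in \C(r_j-)$ yield an increment of at most $\norm{g_{r_j}-\Id}{\infty,\C(r_j-)} + \d_\hausd(\C(r_j-),\C(r_j))$. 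Telescoping, $\norm{y_m-y_n}{\infty} \le \sum_{r \in F_m \setminus F_n}(\norm{g_r-\Id}{\infty,\C(r-)} + \d_\hausd(\C(r-),\C(r)))$, which tends to $0$ as $n\to\infty$ since both series converge (the second being bounded by $\pV(\C,\clint{a,b})$). Thus $y_n$ converges uniformly to a right-continuous limit $y$ with $y(t) \in \C(t)$, $y(a) = y_0$, and --- by lower semicontinuity of the variation --- the bound~\eqref{estimate for Var(y)_S gen}. The jump condition at $t \in S_0$ passes to the limit via $\norm{y_n(t-)-y(t-)}{} \le \norm{y_n-y}{\infty}$ and continuity of $g_t$; at $t \in S \setminus S_0$ each $y_n$ already satisfies $y_n(t)=y_n(t-)$ because $\Proj_{\C(t)}$ restricted to $\C(t-) \subseteq \C(t)$ is the identity.

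Define the limit measure $\mu := \D\ell_\C + \sum_{t \in S_0}\norm{g_t-\Id}{\infty,\C(t-)}\delta_t$, a finite positive measure. Since $\mu_n \le \mu$, one has $\vartot{\D y_n}(B) \le M\mu_n(B) \le M\mu(B)$ for every Borel $B$, so by Theorem~\ref{measure dunford pettis} the sequence $\{\D y_n\}$ is weakly-$*$ precompact; uniform convergence $y_n \to y$ identifies the limit with $\D y$, giving $\vartot{\D y}(B) \le M\mu(B)$ and $\D y = v\mu$ with $v \in \L^1(\mu;\H)$, $\norm{v}{} \le M$ $\mu$-a.e., by the vectorial Radon-Nikod\'ym theorem. \textbf{The main obstacle} is verifying $-v(t) \in N_{\C(t)}(y(t))$ for $\mu$-a.e.\ $t \in \clsxint{a,b}\setminus S$. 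The strategy is to use Lemma~\ref{L:int. charact.}: starting from $\int_{\clsxint{a,b}\setminus F_n}\duality{y_n-z}{v_n}\de\mu_n \le 0$, rewrite it as $\int_{\clsxint{a,b}}\duality{y_n-z}{\de\D y_n} \le \sum_{t\in F_n}\duality{y_n(t)-z(t)}{y_n(t)-y_n(t-)}$ using $\D y_n(\{t\})=y_n(t)-y_n(t-)$ at atoms. To pass to the limit, observe that the densities $\tilde v_n := \de\D y_n/\de\mu$ are bounded in $\L^\infty(\mu;\H)$ by $M$ and converge weakly in $\L^2(\mu;\H)$ to $v$ (deduced from weak-$*$ convergence $\D y_n \to \D y$); combined with uniform convergence $y_n\to y$ this gives convergence of the left-hand side to $\int\duality{y-z}{\de\D y}$, while the right-hand side converges to $\sum_{t\in S_0}\duality{y(t)-z(t)}{y(t)-y(t-)}$ by dominated convergence (the jump sizes $\norm{y_n(t)-y_n(t-)}{}$ are dominated by $\norm{g_t-\Id}{\infty,\C(t-)}+\d_\hausd(\C(t-),\C(t))$, summable over $S_0$). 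Re-decomposing the left-hand side into its $S_0$-atomic and non-atomic parts and cancelling common terms gives $\int_{\clsxint{a,b}\setminus S_0}\duality{y-z}{v}\de\mu \le 0$, whence Lemma~\ref{L:int. charact.} provides the inclusion on $\clsxint{a,b}\setminus S_0 \supseteq \clsxint{a,b}\setminus S$.

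For the contraction and uniqueness, consider two solutions $y_1,y_2$ with initial data $y_{0,1},y_{0,2}$. Choose a common dominating measure $\mu := \mu_1+\mu_2$ with densities $v_j$. The chain rule for vector-valued BV functions gives $\D\norm{y_1-y_2}{}^2 = \duality{(y_1-y_2)+(y_1-y_2)(\cdot-)}{(v_1-v_2)\mu}$. At $\mu$-a.e.\ non-atomic point outside $S$, monotonicity of $N_{\C(t)}$ applied to $y_1(t), y_2(t) \in \C(t)$ yields $\duality{y_1-y_2}{v_1-v_2} \le 0$; at atoms outside $S$, the same monotonicity (with cross test vectors $y_2(t), y_1(t) \in \C(t)$) forces the jump $\norm{y_1(t)-y_2(t)}{}^2 - \norm{y_1(t-)-y_2(t-)}{}^2 \le 0$; at atoms in $S$, the relation $y_j(t)=g_t(y_j(t-))$ with $\Lipcost(g_t)\le 1$ gives the same inequality directly. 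Hence $\D\norm{y_1-y_2}{}^2 \le 0$ and the right-continuous function $t \mapsto \norm{y_1(t)-y_2(t)}{}^2$ is non-increasing; specializing to $y_{0,1}=y_{0,2}$ forces $y_1\equiv y_2$, giving uniqueness.
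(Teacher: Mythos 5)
Your proposal is correct and follows essentially the same route as the paper's proof: finite-jump approximation via Lemma \ref{L:case S finite}, a uniform Cauchy estimate built on \eqref{moreau cont. dep.} and the $1$-Lipschitz hypothesis with cost $\norm{g_t - \Id}{\infty,\C(t-)} + \d_\hausd(\C(t-),\C(t))$ for each newly inserted point, weak compactness of $\D y_n$ through Theorem \ref{measure dunford pettis} with the common dominating measure $\mu$, passage to the limit in the variational inequality of Lemma \ref{L:int. charact.}, and the same monotonicity/$1$-Lipschitz contraction argument for uniqueness. The only real deviation is your detour through the full-interval integral with atomic corrections when recovering \eqref{diff. incl. [0,T[}: since $\clsxint{a,b} \setmeno S_0 \subseteq \clsxint{a,b} \setmeno F_n$ for every $n$, one can integrate over the fixed set $\clsxint{a,b} \setmeno S_0$ from the outset (as the paper does), which both shortens the argument and removes the extra domination care your dominated-convergence step needs at the atoms when the test selection $z$ is unbounded.
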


\begin{proof}
We may assume again that $\norm{g_t - \Id}{\infty,\C(t-)} \neq 0$ for every $t \in S$, thus from \eqref{sum gt} it follows that $S$ is at most countable, and we may assume it contains infinitely many elements, since the finite case is considered in Lemma \ref{L:case S finite}. Let $\mu : \borel(\clsxint{a,b}) \function \clsxint{0,\infty}$ be defined by
\begin{equation}\label{formula for mu-gen}
  \mu = \D\ell_\C + \sum_{t \in S} \norm{g_t - \Id}{\infty, \C(t-)} \delta_{t}.
\end{equation}
If $S = \{s_n\ :\ n \in \en\}$, we set $S_n := \{s_1, \ldots, s_n\}$ for every $n \in \en$ and from Lemma \ref{L:case S finite} it follows that there is a unique $y_n \in \BV^\r(\clsxint{a,b};\H)$ and a unique $v_n \in L^1(\mu;\H)$ satisfying
\begin{alignat}{3}
  & y_n(t) \in \C(t) & \quad & \forall t \in \clsxint{a,b}, \label{y_n in C(t)} \\
  & \D y_n = v_n \mu, \\
  & - v_n(t) \in N_{\C(t)}(y_n(t)) & \quad & \text{for $\mu$-a.e. $t \in \clsxint{a,b} \setmeno S_n$}, \label{v_n in N} \\
  & y_n(t) = g_{t}(y_n(t-)) & \quad & \forall t \in S_n, \label{y_n(t) = g_{t}(y_n(t-))} \\
  & y_n(a) = y_0. \label{c.i. for y_n}
\end{alignat}
Moreover if $a \le s < t \le b$ we have
\begin{align}
  \pV(y_n, \clint{s,t}) 
    \le \pV(\C, \clint{s,t}) + 
    \sum_{r \in S \cap \clint{s,t}} \left(\norm{g_r - \Id}{\infty,\C(r-)}- \d_\hausd(\C(r-),\C(r))\right) < \infty 
    \label{estimate for Var(y_n)}
\end{align}
and 
\begin{equation}\label{|v_n|<1 proof eq}
  \norm{v_n(t)}{} \le \pV(\C,\clint{a,b})/(b-a) \qquad \text{for $\mu$-a.e. $t \in \clsxint{a,b}$}.
\end{equation}
Recall that $S_{n+1} \setmeno S_n = \{s_{n+1}\}$ and assume that $S_n = \{t_1, \ldots, t_{n}\}$ with $t_{j-1} < t_j$ for every $j$, and that  $t_{h-1} < s_{n+1} < t_h$ for some $h \in \{2, \ldots, n\}$ (the cases $s_{n+1} < t_1$ and 
$t_n < s_{n+1}$ are dealt with similarly). Then $y_{n+1}(t) = y_{n}(t)$ for every $t \in \opint{a,s_{n+1}}$, while at 
$t \in \clsxint{s_{n+1}, t_h}$ the distance between $y_n$ and $y_{n+1}$ can be estimated by using 
\eqref{moreau cont. dep.}, Theorem \ref{T:sweeping process existence}, and 
\eqref{y_n in C(t)}--\eqref{y_n(t) = g_{t}(y_n(t-))} as follows:
\begin{align}\label{cauchy estimate on s_(n+1), t_h}
  \sp \norm{y_{n+1}(t) - y_{n}(t)}{}
    & \le \norm{y_{n+1}(s_{n+1}) - y_n(s_{n+1})}{} \notag \\
    & =   \norm{g_{s_{n+1}}(y_{n+1}(s_{n+1}-)) - \Proj_{\C(s_{n+1})}(y_n(s_{n+1}-))}{} \notag \\
    & =   \norm{g_{s_{n+1}}(y_{n}(s_{n+1}-)) - \Proj_{\C(s_{n+1})}(y_n(s_{n+1}-)}{} \notag \\
    & \le \norm{g_{s_{n+1}}(y_{n}(s_{n+1}-)) - y_n(s_{n+1}-)}{}  \notag \\
    & \phantom{\le\ \ } + 
            \norm{y_n(s_{n+1}-)- \Proj_{\C(s_{n+1})}(y_n(s_{n+1}-))}{}   
            \notag \\
    & \le \norm{g_{s_{n+1}} - \Id}{\infty, \C(s_{n+1}-)} + \d_\hausd(\C(s_{n+1}-), \C(s_{n+1})) 
          \qquad \forall t \in \clsxint{s_{n+1}, t_h}.\end{align}
If $h < n$ and $t \in \clsxint{t_h, t_{h+1}}$, from \eqref{moreau cont. dep.}, 
\eqref{y_n in C(t)}-\eqref{y_n(t) = g_{t}(y_n(t-))}, and \eqref{cauchy estimate on s_(n+1), t_h}, we infer that
\begin{align}
  \norm{y_{n+1}(t) - y_{n}(t)}{} 
    & \le \norm{g_{t_h}(y_{n+1}(t_h-)) - g_{t_h}(y_{n}(t_h-))}{} \notag \\
    & \le \norm{y_{n+1}(t_h-) - y_{n}(t_h-)}{} \notag \\
    & = \lim_{t \to t_h-} \norm{y_{n+1}(t) - y_{n}(t)}{} \notag \\
    & \le \norm{g_{s_{n+1}} - \Id}{\infty, \C(s_{n+1}-)} + \d_\hausd(\C(s_{n+1}-), \C(s_{n+1})), \notag
\end{align}
and iterating this procedure we get the same estimate for $t \in \clsxint{t_k, t_{k+1}}$ and $h \le k < n$, thus
\[
  \norm{y_{n+1} - y_{n}}{\infty} \le \norm{g_{s_{n+1}} - \Id}{\infty, \C(s_{n+1}-)} + \d_\hausd(\C(s_{n+1}-), \C(s_{n+1})),
\]
and
\begin{align}
  \sum_{n=1}^\infty \norm{y_n - y_{n+1}}{\infty} 
    & \le \sum_{n=1}^\infty \Big( \norm{g_{s_{n+1}} - \Id}{\infty, \C(s_{n+1}-)} + \d_\hausd(\C(s_{n+1}-), \C(s_{n+1})) \Big) \notag \\
   & \le \sum_{t \in S} \norm{g_{t} - \Id}{\infty, \C(t-)} + \pV(\C,\clsxint{a,b}) < \infty \notag
\end{align}
i.e. $y_n$ is uniformly Cauchy and there exists $y : \clsxint{a,b} \function \H$ such that
\begin{equation}\label{yn -> y unif}
  y_n \to y \qquad \text{uniformly on $\clsxint{a,b}$}.
\end{equation}
Moreover $y \in \BV(\clsxint{a,b};\H)$ and \eqref{estimate for Var(y)_S gen} holds by \eqref{estimate for Var(y_n)} and the semicontinuity of the variation w.r.t. to the pointwise convergence, and $y$ satisfies \eqref{constr. [0,T[} by virtue of the closedness of $\C(t)$. Conditions \eqref{cond. on S, [0,T[}-\eqref{i.c. [0,T[} are trivially satisfied because for every 
$t \in S$ the sequence $y_n(t)$ is definitively constant, equal to $g_t(y(t-))$. Let us observe that thanks to 
\eqref{|v_n|<1 proof eq} we have that
\[
  \vartot{\D y_n}(B) = \int_B \norm{v_n(t)}{} \de \mu(t) \le 
  \int_B \frac{\pV(\C,\clint{a,b})}{b-a} \de \mu(t) = 
  \frac{\pV(\C,\clint{a,b})}{b-a}\mu(B) \qquad \forall B \in \borel(\clsxint{a,b})
\]
hence by the Dunford-Pettis Theorem \ref{measure dunford pettis} for measures and from \cite[Lemma 7.1]{KopRec16} we infer that
\begin{equation}\label{Dyn -> Dy}
  \D y_n \convergedeb \D y
\end{equation}
Since $\norm{v_n}{\L^2(\mu;\H)} \le \pV(\C,\clint{a,b})\sqrt{\mu(\clsxint{a,b})}/(b-a) < \infty$, there exists 
$v \in \L^2(\mu;\H)$ such that, up to subsequences, 
\begin{equation}\label{v_n -> v}
  v_n \convergedeb v \qquad \text{in $\L^2(\mu;\H)$ as $n \to \infty$},
\end{equation} 
and thanks to \eqref{v_n in N} and Lemma \ref{L:int. charact.} we have that
\begin{equation}
  \int_{\clsxint{a,b}\!\!\ \setmeno\!\ S} \duality{y_n(t) - z(t)}{v_n(t)} \de \mu(t) \le 0 \qquad \forall n \in \en,
\end{equation}
therefore taking the limit as $n \to \infty$ from \eqref{yn -> y unif} and \eqref{v_n -> v} we infer that
\begin{equation}
  \int_{\clsxint{a,b}\!\!\ \setmeno\!\ S} \duality{y(t) - z(t)}{v(t)} \de \mu\le 0
\end{equation}
which, by Lemma \ref{L:int. charact.}, is equivalent to \eqref{diff. incl. [0,T[}. 
Estimate \eqref{estimate for Var(y)_S gen} follows from \eqref{estimate for Var(y_n)} and from the lower semicontinuity of the variation.
If $\phi : \clsxint{a,b} \function \H$ is an arbitrary bounded Borel function then 
$\mu \longmapsto \int_{\clsxint{a,b}} \duality{\phi(t)}{\de \mu(t)}$ is a continuous linear functional on the space of measures with bounded variation and we have
\[
  \lim_{n \to \infty} \int_{\clsxint{a,b}} \duality{\phi(t)}{\de \D y_n(t)} = \int_{\clsxint{a,b}} \duality{\phi(t)}{\de \D y(t)}.
\]
On the other hand we have
\[
  \int_{\clsxint{a,b}} \duality{\phi(t)}{\de \D y_n(t)} = \int_{\clsxint{a,b}} \duality{\phi(t)}{v_n(t)}\de \mu(t)
   \qquad \forall n \in \en,
\]
thus taking the limit as $n \to \infty$ thanks to \eqref{Dyn -> Dy} and \eqref{v_n -> v} we get
\[
  \int_{\clsxint{a,b}} \duality{\phi(t)}{\de \D y(t)} = \int_{\clsxint{a,b}} \duality{\phi(t)}{v(t)}\de \mu(t)
\]
therefore the arbitrariness of $\phi$ yields $\D y = v \mu$ (cf., e.g., \cite[Proposition 35, p. 326]{Din67}). Then
\eqref{Dy =vmu [0,T[} is proved and the existence part of the theorem is done. If $B \in \borel(\clsxint{a,b})$ then, by \cite[Proposition 2]{Mor76} and by the monotonicity of the normal cone we have
\begin{align}
  \int_{B\!\  \setmeno\!\  S} \de \D\!\ (\norm{y_1(\cdot) - y_2(\cdot)}{}^2) 
    & \le 2 \int_{B\!\ \setmeno\!\  S} \duality{y_1 - y_2}{\de \D\!\ (y_1 - y_2)} \notag \\
    & = 2 \int_{B\!\  \setmeno\!\  S} \duality{y_1(t) - y_2(t)}{v_1(t) - v_2(t)} \de \mu(t) \le 0,
\end{align}
while if $t \in S$ then we have
\begin{align}
  \D\!\ (\norm{y_1(\cdot) - y_2(\cdot)}{}^2)(\{t\})
    & =   \norm{y_1(t) - y_2(t)}{}^2 - \norm{y_1(t-) - y_2(t-)}{}^2 \notag \\
    & =   \norm{g_t(y_1(t-)) - g_t(y_2(t-))}{} - \norm{y_1(t-) - y_2(t-)}{}^2 \notag \\ 
    & \le \norm{y_1(t-) - y_2(t-)}{} - \norm{y_1(t-) - y_2(t-)}{}^2 = 0,
\end{align}
therefore for every $B \in \borel(\clsxint{a,b})$ we find
\begin{align}
  \D\!\ (\norm{y_1(\cdot) - y_2(\cdot)}{}^2)(B) 
    & = \D\!\ (\norm{y_1(\cdot) - y_2(\cdot)}{}^2)(B \setmeno S) + 
            \D\!\ (\norm{y_1(\cdot) - y_2(\cdot)}{}^2)(B \cap S) \notag \\
    & = \int_{B\!\ \setmeno\!\  S} \de \D\!\ (\norm{y_1(\cdot) - y_2(\cdot)}{}^2) + 
          \sum_{t \in B \cap S} \D\!\ (\norm{y_1(\cdot) - y_2(\cdot)}{}^2)(\{t\}) \le 0 \notag
\end{align}
which implies that $t \longmapsto \norm{y_1(t) - y_2(t)}{}^2$ is nonincreasing and leads to the uniqueness of the solution. As a consequence the whole sequence $y_n$ converges uniformly to $y$.
\end{proof}


\section{Applications}\label{S:applications}

In this section we discuss some consequences and particular cases of Theorem \ref{main thm}.
 

\subsection{Sweeping processes with arbitrary $\BV$ driving set and prescribed behaviour on jumps}

We first consider the case of a sweeping processes with prescribed behaviour on jumps, where the driving moving set is not assumed to be right continuous.

\begin{Thm}\label{T:gen sweep gen BV}
Assume that $-\infty < a < b < \infty$, $\C \in \BV(\clint{a,b};\Conv_\H)$, $y_0 \in \H$, and that for every 
$t \in \discont(\C)  \cup \{a\}$ we are given $g_t^\l : \C(t-) \function \C(t)$ and $g_t^\r : \C(t) \function \C(t+)$ such that 
\begin{equation}\label{Lip cond gt-l-r}
  \Lipcost(g_t^\l) \le 1, \quad\Lipcost(g_t^\r) \le 1 \qquad \forall t \in \discont(\C) \!\ \setmeno\!\ \{a\}
\end{equation}
and
\begin{equation}\label{sum gt-l-r}
  \sum_{t \in \discont(\C)} \norm{g_t^\l - \Id}{\infty,\C(t-)} < \infty, \quad \sum_{t \in \discont(\C)} \norm{g_t^\r - \Id}{\infty,\C(t)} < \infty.
\end{equation}
Then there exists a unique $y \in \BV(\clint{a,b};\H)$ such that there is a measure
$\mu : \borel(\clint{a,b}) \function \clsxint{0,\infty}$ and a function $v \in \L^1(\mu;\H)$ for which
\begin{align}
  & y(t) \in \C(t) \qquad \forall t \in \clint{a,b}, \label{constr. very gen} \\
  & \D y = v \mu, \label{Dy very gen} \\
  & -v(t) \in N_{\C(t)}(y(t)) \qquad \text{for $\mu$-a.e. $t \in \cont(\C)$}, \\
  & y(t) = g_t^\l((y(t-))), \quad
    y(t+) = g_t^\r(g_t^\l((y(t-))) \qquad \forall t \in \discont(\C)\!\ \setmeno \!\ \{a\}, \\
  & y(a) = g_a^\l(y_0), \quad y(a+) = g_a^\r(a)(y(a)). \label{i.c. very  gen}
\end{align}
Moreover if $a \le s < t \le b$ then
\begin{align}
 & \pV(y,\clint{s,t})  \le \pV(\C,\clint{s,t}) \notag \\
 & + \sum_{\sigma \in \discont(\C) \cap \clint{s,t}} 
  \Big(\norm{g_\sigma^\l - \Id}{\infty,\C(\sigma-)} + 
         \norm{g_\sigma^\r - \Id}{\infty,\C(\sigma)} -
              \d_\hausd(\C(\sigma-),\C(\sigma)) + \d_\hausd(\C(\sigma),\C(\sigma+))\Big). \notag
\end{align}
\end{Thm}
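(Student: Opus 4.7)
The plan is to reduce the claim to Theorem \ref{main thm} by passing to the right-continuous regularization $\widetilde{\C}(t) := \C(t+)$ (with the convention $\C(b+) := \C(b)$) and by collapsing the two prescribed maps at each jump into a single composition.

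First I would verify that $\widetilde{\C} \in \BV^\r(\clint{a,b}; \Conv_\H)$, which is immediate from the existence of one-sided limits of $\C$ together with $\pV(\widetilde{\C}, \clint{a,b}) \le \pV(\C, \clint{a,b})$; the discontinuity points of $\widetilde{\C}$ are contained in $\discont(\C)$, and at any such $t$ we have $\widetilde{\C}(t-) = \C(t-)$ and $\widetilde{\C}(t) = \C(t+)$. For every $t \in \discont(\C) \cap \cldxint{a,b}$ I set $\widetilde{g}_t := g_t^\r \circ g_t^\l : \widetilde{\C}(t-) \function \widetilde{\C}(t)$; clearly $\Lipcost(\widetilde{g}_t) \le 1$, and the triangle inequality
\[
  \norm{\widetilde{g}_t - \Id}{\infty, \widetilde{\C}(t-)} \le \norm{g_t^\l - \Id}{\infty, \C(t-)} + \norm{g_t^\r - \Id}{\infty, \C(t)},
\]
combined with \eqref{sum gt-l-r}, supplies the summability hypothesis \eqref{sum gt-mr} of Theorem \ref{main thm}. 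With $\widetilde{y}_0 := g_a^\r(g_a^\l(y_0)) \in \widetilde{\C}(a)$ and $\widetilde{S} := \discont(\C) \cap \cldxint{a,b}$, Theorem \ref{main thm} produces a unique triple $(\widetilde{y}, \widetilde{\mu}, \widetilde{v})$ solving \eqref{constr. [0,T[-mr}--\eqref{i.c. [0,T[-mr}.

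I would then define $y : \clint{a,b} \function \H$ by $y(a) := g_a^\l(y_0)$, by $y(t) := g_t^\l(\widetilde{y}(t-))$ for $t \in \discont(\C) \cap \cldxint{a,b}$, and by $y(t) := \widetilde{y}(t)$ otherwise. Right continuity of $\widetilde{y}$ yields $\widetilde{y}(t-) = y(t-)$ at every $t \in \discont(\C)$, so the prescribed jump relations and the inclusion $y(t) \in \C(t)$ hold by construction. To supply $\mu$ and $v$, note that by the Lebesgue--Stieltjes theorem $\D y(\{t\}) = y(t+) - y(t-) = \widetilde{y}(t) - \widetilde{y}(t-) = \D \widetilde{y}(\{t\})$ for every $t \in \discont(\C) \cap \opint{a,b}$, and $\D y$ agrees with $\D \widetilde{y}$ on $\opint{a,b}$ off the jump set; only $t = a$ carries an extra Dirac of mass $g_a^\r(g_a^\l(y_0)) - g_a^\l(y_0)$, absent from $\D \widetilde{y}$ because of the convention $\widetilde{y}(a-) = \widetilde{y}(a)$. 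Accordingly I would take $\mu := \widetilde{\mu} + \norm{g_a^\r(g_a^\l(y_0)) - g_a^\l(y_0)}{}\, \delta_a$, let $v = \widetilde{v}$ off $\{a\}$, and define $v(a)$ to be the unit vector along the initial jump (zero if that jump vanishes). The differential inclusion on $\cont(\C)$ is then inherited from \eqref{diff. incl. [0,T[-mr} applied to $\widetilde{v}$, since $\cont(\C) \subseteq \clint{a,b} \setmeno \widetilde{S}$ and on this set $y = \widetilde{y}$, $\C = \widetilde{\C}$.

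The variation bound would follow by combining the estimate \eqref{estimate for Var(y)_S gen} applied to $\widetilde{y}$ with the identity
\[
  \pV(y,\clint{s,t}) - \pV(\widetilde{y},\clint{s,t}) = \sum_{\sigma \in \discont(\C) \cap \clint{s,t}} \bigl(\norm{y(\sigma) - y(\sigma-)}{} + \norm{y(\sigma+) - y(\sigma)}{} - \norm{y(\sigma+) - y(\sigma-)}{}\bigr),
\]
together with the upper bounds $\norm{y(\sigma) - y(\sigma-)}{} \le \norm{g_\sigma^\l - \Id}{\infty,\C(\sigma-)}$, $\norm{y(\sigma+) - y(\sigma)}{} \le \norm{g_\sigma^\r - \Id}{\infty,\C(\sigma)}$, and with the elementary comparison $\pV(\widetilde{\C},\clint{s,t}) \le \pV(\C,\clint{s,t}) - \sum_\sigma \bigl(\d_\hausd(\C(\sigma-),\C(\sigma)) + \d_\hausd(\C(\sigma),\C(\sigma+)) - \d_\hausd(\C(\sigma-),\C(\sigma+))\bigr)$. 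Uniqueness is inherited from Theorem \ref{main thm}: any other solution $y'$, read off at right limits, yields a solution of the regularized system and must coincide with $\widetilde{y}$, whence $y' = y$. The main obstacle is the careful bookkeeping of how a single jump of $\widetilde{\C}$ is split into the two consecutive steps performed by $y$, and especially the endpoint $a$, where $y_0 \in \H$ need not lie in $\C(a)$ and so forces an extra Dirac mass in $\mu$ that Theorem \ref{main thm} does not produce on its own.
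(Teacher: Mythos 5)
Your construction is the same as the paper's: apply Theorem \ref{main thm} to the right-continuous regularization $t \longmapsto \C(t+)$ with the composite maps $g_t^\r \circ g_t^\l$, then re-insert the intermediate value $g_t^\l(\widetilde{y}(t-))$ at each $t \in \discont(\C)$; existence, the jump relations, the differential inclusion on $\cont(\C)$ and the transfer of uniqueness all go through exactly as you describe, and your explicit Dirac mass at $a$ (needed so that the atom $\D y(\{a\}) = y(a+)-y(a)$ is $\mu$-absolutely continuous) is if anything a more careful bookkeeping of the endpoint than the paper's, which keeps $\mu = \hat\mu$.

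The gap is in your derivation of the variation estimate. Your chain is: estimate \eqref{estimate for Var(y)_S gen} for $\widetilde{y}$, plus the exact identity relating $\pV(y)$ and $\pV(\widetilde{y})$, plus the bounds on the two legs. Carrying this out, the per-jump coefficient you obtain is
\[
  \norm{g_\sigma^\r \circ g_\sigma^\l - \Id}{\infty,\C(\sigma-)} - \norm{y(\sigma+)-y(\sigma-)}{}
  + \norm{g_\sigma^\l - \Id}{\infty,\C(\sigma-)} + \norm{g_\sigma^\r - \Id}{\infty,\C(\sigma)}
  - \d_\hausd(\C(\sigma-),\C(\sigma)) - \d_\hausd(\C(\sigma),\C(\sigma+)),
\]
and to reach the stated coefficient you would need $\norm{g_\sigma^\r \circ g_\sigma^\l - \Id}{\infty,\C(\sigma-)} - \norm{y(\sigma+)-y(\sigma-)}{} \le 2\,\d_\hausd(\C(\sigma),\C(\sigma+))$, which is false in general because the sup norm may be attained far from the actual state $y(\sigma-)$: in $\H=\ar$ take $\C \equiv \clint{-1-\eta,1}$ before $\sigma$, $\C \equiv \clint{-1,1}$ from $\sigma$ on, the solution sitting at $y(\sigma-)=1$, and $g_\sigma^\l(x)=\max((x-1)/2,-1)$, $g_\sigma^\r(x)=\min(x+1,1)$; then the left-hand side equals $1+\eta$ while the right-hand side is $0$. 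In other words, routing the jumps through \eqref{estimate for Var(y)_S gen} for $\widetilde{y}$ charges the worst-case composite displacement and then again the actual two legs, so your chain only proves the weaker bound with $2\bigl(\norm{g_\sigma^\l-\Id}{\infty,\C(\sigma-)}+\norm{g_\sigma^\r-\Id}{\infty,\C(\sigma)}\bigr)$. The remedy is not to pass the jump terms through the estimate for $\widetilde{y}$ at all: split $\pV(y,\clint{s,t})$ into the atoms at $\discont(\C)$, which you bound directly by $\norm{g_\sigma^\l-\Id}{\infty,\C(\sigma-)}+\norm{g_\sigma^\r-\Id}{\infty,\C(\sigma)}$, and the remaining (diffuse plus off-$\discont(\C)$) part, which coincides with that of $\widetilde{y}$ and is controlled, as in the computation inside Lemma \ref{L:case S finite}, by $\pV(\C,\clint{s,t})$ minus its own jump contributions $\sum_\sigma\bigl(\d_\hausd(\C(\sigma-),\C(\sigma))+\d_\hausd(\C(\sigma),\C(\sigma+))\bigr)$; this yields the stated inequality, in fact with $-\d_\hausd(\C(\sigma),\C(\sigma+))$ in place of $+\d_\hausd(\C(\sigma),\C(\sigma+))$.
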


\begin{proof}
We can apply Theorem \ref{main thm} with $g_t := g_t^\r \circ g_t^\l$ and find a unique function 
$\hat{y} \in \BV^\r(\clint{a,b};\H)$ such that there is a measure $\hat{\mu} : \borel(\clint{a,b}) \function \clsxint{0,\infty}$ and a function 
$\hat{v} \in \L^1(\mu;\H)$ for which
\begin{align}
  & \hat{y}(t) \in \C(t+) \qquad \forall t \in \clint{a,b} \label{constr yhat} \\
  & \D \hat{y} = \hat{v} \hat{\mu}, \\
  & - \hat{v}(t) \in N_{\C(t+)}(\hat{y}(t)) = N_{\C(t)}(\hat{y}(t)) \qquad \text{for $\hat{\mu}$-a.e. $t \in \cont(\C)$}, \\
  & \hat{y}(t) = g_t^\r(g_t^\l(\hat{y}(t-))) \qquad \forall t \in \discont(\C) \setmeno \{a\}, \\
  & \hat{y}(a) = g_a^\r(y_0). \label{i.c. yhat}
\end{align}
Then the theorem is satisfied if we take $y : \clint{a,b} \function \H$, $\mu : \borel(\clint{a,b}) \function \clsxint{0,\infty}$, and $v: \clint{a,b} \function \H$ defined by
\[
  y(t) := 
    \begin{cases}
      \hat{y}(t) & \text{if $t \in \cont(\C)$} \\
      g_t^\l(\hat{y}(t-)) & \text{if $t \in \discont(\C)$} 
    \end{cases},
\]
$\mu:= \hat{\mu}$, and
\[
  v(t) := 
  \begin{cases}
  \hat{v}(t) & \text{if $t \in \cont(\C)$} \\
  0 & \text{if $t \in \discont(\C)$ and $\hat{\mu}(\{t\}) = 0$}
  \ \\
  \dfrac{y(t+) - y(t-)}{\hat{\mu}(\{t\})} & \text{if $t \in \discont(\C)$ and $\hat{\mu}(\{t\}) \neq 0$} 
  \end{cases}.
\]
(observe that the first condition in \eqref{sum gt-l-r} ensures that $y \in \BV(\clint{a,b};\H)$).
\end{proof}

\begin{Rem}
Let us observe that we can actually prove a result which is more general than Theorem \ref{T:gen sweep gen BV}: indeed we can prescribe the behavior of the solution $y(t)$ also on a countable set of points $t$ where $\C$ is continuous. In order to do that we need to assume that the families $g_t^\l : \C(t-) \function \C(t)$ and 
$g_t^\r : \C(t) \function \C(t+)$ are indexed by $t \in S$, where $S \subseteq \clint{a,b}$, $\Lipcost(g_t^\l) \le 1$ and 
$\Lipcost(g_t^\r) \le 1$ for every $t \in S$, and $\sum_{t \in S} \norm{g_t^\l - \Id}{\infty,\C(t-)} < \infty$ and
$\sum_{t \in S} \norm{g_t^\r - \Id}{\infty,\C(t)} < \infty$. Therefore it follows that if $\C \in \BV(\clint{a,b};\Conv_\H)$, then there exists a unique $y \in \BV(\clint{a,b};\H)$ such that there is a measure 
$\mu : \borel(\clint{a,b}) \function \clsxint{0,\infty}$ and a function $v \in \L^1(\mu;\H)$ for which \eqref{constr. very gen}, \eqref{Dy very gen}, and \eqref{i.c. very  gen} hold together with
\begin{align}
  & -v(t) \in N_{\C(t)}(y(t)) \qquad \text{for $\mu$-a.e. $t \in \clint{a,b} \setmeno S$}, \\
  & y(t) = g_t^\l((y(t-))), \quad
    y(t+) = g_t^\r(g_t^\l((y(t-))) \qquad \forall t \in S\!\ \setmeno \!\ \{a\}.
\end{align}
Observe that in this case $S$ is a fortiori at most countable and $y$ may jump even when $\C$ does not jump.
\end{Rem}


\subsection{Sweeping processes with arbitrary $\BV$ driving moving set.}

Another consequence of Theorem \ref{T:gen sweep gen BV} is the existence and uniqueness theorem for sweeping processes with arbitrary $\BV$ driving moving set.

\begin{Thm}
Assume that $-\infty < a < b < \infty$, $\C \in \BV(\clint{a,b};\Conv_\H)$ and $y_0 \in \H$. Then there exists a unique 
$y \in \BV(\clint{a,b};\H)$ such that there is a measure $\mu : \borel(\clint{a,b}) \function \clsxint{0,\infty}$ and a function $v \in \L^1(\mu;\H)$ for which
\begin{align}
  & y(t) \in \C(t), \label{constr. arbBV sweep} \\
  & \D y = v \mu, \\
  & -v(t) \in N_{\C(t)}(y(t)) \qquad \text{for $\mu$-a.e. $t \in \cont(\C)$}, \\
  & y(t) = \Proj_{\C(t)}(y(t-)), \qquad
   y(t+) = \Proj_{\C(t+)}(y(t)) \qquad \forall t \in \discont(\C) \!\ \setmeno \!\ \{a\} \\
  & y(a) = \Proj_{\C(a)}(y_0), \quad y(a+) = \Proj_{\C(a+)(y(a))}. \label{i.c. arbBV sweep}
\end{align}
Moreover $\pV(y, \clint{s,t}) \le \pV(\C, \clint{s,t})$ whenever $a \le s < t \le b$.
\end{Thm}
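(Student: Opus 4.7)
The plan is to deduce this theorem as a direct corollary of Theorem \ref{T:gen sweep gen BV}, by choosing the prescribed jump maps to be the natural projections onto the one-sided limits of $\C$. Concretely, for every $t \in \discont(\C) \cup \{a\}$, I set
\[
  g_t^\l := \Proj_{\C(t)}\big|_{\C(t-)}, \qquad g_t^\r := \Proj_{\C(t+)}\big|_{\C(t)},
\]
with the convention that at $t=a$ the map $g_a^\l$ is understood as $\Proj_{\C(a)}$ applied to the initial datum $y_0 \in \H$ (and similarly $g_a^\r = \Proj_{\C(a+)}$). By construction these maps take values in the required target sets, so the statements \eqref{constr. arbBV sweep}--\eqref{i.c. arbBV sweep} of the theorem are precisely the specialization of \eqref{constr. very gen}--\eqref{i.c. very gen} to this choice.

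Next I need to check that the hypotheses of Theorem \ref{T:gen sweep gen BV} are satisfied. The Lipschitz bound \eqref{Lip cond gt-l-r} is immediate: the projection onto a closed convex subset of a Hilbert space is a $1$-Lipschitz contraction, a fact recalled in Subsection~2.2. The summability \eqref{sum gt-l-r} uses the defining property of the projection: for $x \in \C(t-)$,
\[
  \|g_t^\l(x) - x\| \;=\; \d(x,\C(t)) \;\le\; \sup_{z \in \C(t-)} \d(z,\C(t)) \;\le\; \d_{\hausd}(\C(t-),\C(t)),
\]
and analogously $\|g_t^\r(x) - x\| \le \d_{\hausd}(\C(t),\C(t+))$ for $x \in \C(t)$. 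Consequently
\[
  \sum_{t \in \discont(\C)} \|g_t^\l - \Id\|_{\infty,\C(t-)} \;+\; \sum_{t \in \discont(\C)} \|g_t^\r - \Id\|_{\infty,\C(t)}
  \;\le\; \pV(\C,\clint{a,b}) \;<\; \infty,
\]
where the bound by $\pV(\C,\clint{a,b})$ comes from a standard partition argument: any finite subsum can be realized by a partition of $\clint{a,b}$ refined to include the relevant discontinuity points from both sides.

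Given these verifications, Theorem \ref{T:gen sweep gen BV} immediately produces the unique $y \in \BV(\clint{a,b};\H)$, positive measure $\mu$, and density $v \in \L^1(\mu;\H)$ satisfying \eqref{constr. arbBV sweep}--\eqref{i.c. arbBV sweep}. The variation estimate $\pV(y,\clint{s,t}) \le \pV(\C,\clint{s,t})$ follows by substituting the sharp projection bounds $\|g_\sigma^\l - \Id\|_{\infty,\C(\sigma-)} \le \d_{\hausd}(\C(\sigma-),\C(\sigma))$ and $\|g_\sigma^\r - \Id\|_{\infty,\C(\sigma)} \le \d_{\hausd}(\C(\sigma),\C(\sigma+))$ into the corresponding inequality of Theorem \ref{T:gen sweep gen BV}, after which the contribution of the sum over $\discont(\C) \cap \clint{s,t}$ becomes nonpositive, since the projection is the optimal choice matching the Hausdorff jump.

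The proof is essentially a bookkeeping exercise, and I do not expect any significant obstacle: all the analytical content has been packaged into Theorem \ref{T:gen sweep gen BV}, and the remaining work is to check the two standard properties of projections (nonexpansiveness, optimality of the distance) and to control the sum of jump magnitudes by the total variation of $\C$. The mildest subtlety is the treatment of the endpoint $a$, which requires extending $g_a^\l$ beyond $\C(a-)=\C(a)$ to accommodate an arbitrary $y_0 \in \H$; this is handled simply by defining it as $\Proj_{\C(a)}$ on $\H$, which preserves the $1$-Lipschitz property.
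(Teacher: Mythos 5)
Your proposal is correct and takes essentially the same route as the paper, whose proof of this theorem is a single line invoking Theorem \ref{T:gen sweep gen BV} with projection jump maps; you merely supply the routine verifications (projections are $1$-Lipschitz, $\norm{\Proj_{\C(t)} - \Id}{\infty,\C(t-)} \le \d_\hausd(\C(t-),\C(t))$, and summability of the jumps via $\pV(\C,\clint{a,b})$). Incidentally, your assignment $g_t^\l = \Proj_{\C(t)}\big|_{\C(t-)}$, $g_t^\r = \Proj_{\C(t+)}\big|_{\C(t)}$ is the one consistent with the domains and codomains required in Theorem \ref{T:gen sweep gen BV}; the paper's one-line proof states them swapped, which is evidently a typo.
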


\begin{proof}
It is enough to apply Theorem \ref{T:gen sweep gen BV} with $g_t^\l := \Proj_{\C(t+)}$ and $g_t^\r := \Proj_{\C(t)}$.
\end{proof}


\subsection{The play operator}\label{SS:play}

The play operator is the solution operator of the sweeping process driven by a moving set $\C(t)$ with constant shape, i.e. $\C(t) = u(t) - \Z$, where $u \in \BV^\r(\clint{a,b};\H)$ and $\Z \in \Conv_\H$. In the following result we restate here the existence Theorem \ref{T:sweeping process existence} in this particular case by using the integral formulation of Lemma \ref{L:int. charact.} and we collect some other well-known results (see also \cite{KreLau02}, where the Young integral is used, and \cite[Section 5]{KopRec16} containing a slightly different integral formulation).

\begin{Thm}
Assume that $-\infty < a < b < \infty$, $\Z \in \Conv_\H$, $u \in \BV^\r(\clint{a,b};\H)$ and $z_0 \in \Z$. Then there exists a unique $y \in \BV^\r(\clint{a,b};\H)$ such that
\begin{align}
  & y(t) \in u(t) - \Z \qquad \forall t \in \clint{a,b}, \label{play constr} \\
  & \int_{\clint{a,b}} \duality{z(t) - u(t) + y(t)}{\de\D y(t)} \le 0 
       \qquad \text{for every $\mu$-measurable $z : \clint{a,b} \function \Z$}, \label{play ineq} \\
  & u(0) - y(0) = z_0. \label{play i.c.}
\end{align}
The solution operator $\P : \BV^\r(\clint{a,b};\H) \times \Z \function \BV^\r(\clint{a,b};\H)$ associating with 
$(u,z_0) \in \BV^\r(\clint{a,b};\H) \times \Z$ the unique function $y = \P(u,z_0)$ satisfying 
\eqref{play constr}-\eqref{play i.c.} is called \emph{play operator} and it is \emph{rate independent}, i.e. 
\begin{equation}\label{rate ind}
  \P(u \circ \psi, z_0) = \P(u, z_0) \circ \psi \qquad \forall u \in \BV^\r(\clint{a,b};\H)
\end{equation}
whenever $\psi \in \Czero(\clint{a,b};\clint{a,b})$ is nondecreasing and surjective. We have that 
$\P(\Lip(\clint{a,b};\H) \times \Z)$ $\subseteq$ $\Lip(\clint{a,b};\H)$ and if $u \in \Lip(\clint{a,b};\H)$ then $\P(z_0, u) = y$ is the unique function satisfying \eqref{play constr}, \eqref{play i.c.}, and
\begin{equation}
  \lduality{z(t) - u(t) - y(t)}{y'(t)} \le 0 \qquad 
  \text{for $\leb^1$-a.e. $t \in \clint{a,b},\ \forall z \in \Z$}. \label{play ineq Lip} 
\end{equation}
The restriction $\P : \Lip(\clint{a,b};\H) \times \Z \function \Lip(\clint{a,b};\H)$ is continuous if $\Lip(\clint{a,b};\H)$ is endowed with both the $\BV$-norm metric 
\[
  \d_\BV(v,w) := \norm{v - w}{\infty,\clint{a,b}} + \pV(v-w;\clint{a,b}),
\]
and the strict metric
\[
  \d_s(v,w) := \norm{v - w}{\L^1(\leb^1;\H)} + |\pV(v;\clint{a,b}) - \pV(w;\clint{a,b})|,
\]
and the operator $\P : \BV^\r(\clint{a,b};\H) \times \Z \function \BV^\r(\clint{a,b};\H)$ is its unique continuous extension w.r.t. the topology induced by $\d_\BV$.
\end{Thm}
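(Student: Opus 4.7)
One obtains the existence and uniqueness of $y \in \BV^\r(\clint{a,b};\H)$ satisfying \eqref{play constr}--\eqref{play i.c.} by applying Theorem \ref{T:sweeping process existence} to $\C(t) := u(t) - \Z$ with initial datum $y(a) := u(a) - z_0 \in \C(a)$: the moving set lies in $\BV^\r(\clint{a,b};\Conv_\H)$ since $\d_\hausd(u(s)-\Z, u(t)-\Z) = \norm{u(s)-u(t)}{}$. The integral formulation \eqref{play ineq} is then equivalent to the differential inclusion $-v(t) \in N_{\C(t)}(y(t))$ through the translation identity $N_{u(t)-\Z}(y(t)) = -N_\Z(u(t)-y(t))$ combined with Lemma \ref{L:int. charact.}, since every $\mu$-measurable section of $\C$ can be written as $\xi(t) = u(t) - z(t)$ with $z(t) \in \Z$ $\mu$-measurable.

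Rate independence I would prove by direct verification. For $\psi \in \Czero(\clint{a,b};\clint{a,b})$ nondecreasing and surjective, set $\tilde y := \P(u,z_0) \circ \psi$; continuity of $\psi$ forces right-continuity of $\tilde y$, the constraint and initial condition are immediate, and the integral inequality \eqref{play ineq} is preserved because on flat intervals of $\psi$ the function $\tilde y$ is constant and contributes nothing to $\D \tilde y$, while elsewhere a change-of-variable for vector measures under the monotone map $\psi$ reduces \eqref{play ineq} for $(\tilde y, u \circ \psi)$ tested against $\tilde z = z \circ \psi$ to the same inequality for $(\P(u,z_0),u)$ tested against $z$. Uniqueness in \eqref{play constr}--\eqref{play i.c.} then yields $\tilde y = \P(u\circ \psi, z_0)$.

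When $u \in \Lip$, the Hausdorff-Lipschitz regularity of $\C$ together with the classical Moreau theorem for Lipschitz sweeping processes recalled in the Introduction gives $y \in \Lip$, so $\D y = y' \leb^1$ and Lebesgue differentiation of \eqref{play ineq} yields the pointwise inequality \eqref{play ineq Lip}. To prove $\d_\BV$-continuity of $\P|_{\Lip}$ I would use the classical hysteresis argument: testing \eqref{play ineq Lip} for $(u_1,y_1)$ with $z = u_2(t) - y_2(t) \in \Z$ and adding the symmetric inequality for $(u_2,y_2)$ with $z = u_1(t) - y_1(t)$ gives, after using the monotonicity of $N_\Z$, the pointwise bound $\tfrac{1}{2}\tfrac{d}{dt}\norm{y_1 - y_2}{}^2 \le \duality{u_1' - u_2'}{y_1 - y_2}$, from which $\norm{y_1-y_2}{\infty} \le \norm{u_1-u_2}{\infty}$ follows at equal initial data; a further differencing at the level of $y_1'-y_2'$ produces the companion control of $\pV(y_1 - y_2)$. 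Continuity in $\d_s$ then follows from $\d_\BV$-continuity via rate independence: using Proposition \ref{D:reparametrization}, strict convergence $u_n \to u$ translates into uniform convergence of the arc-length reparametrizations $\utilde_n \to \utilde$ with uniformly bounded Lipschitz constants, whence $\P(\utilde_n) \to \P(\utilde)$ uniformly with bounded variations, and composing back with $\ell_{u_n}$ recovers $\d_s$-convergence of $\P(u_n)$ to $\P(u)$.

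Finally, the $\d_\BV$-continuous extension to all of $\BV^\r$ follows by combining the $\d_\BV$-continuity estimate above (which persists in the $\BV^\r$ setting by an argument parallel to the Lipschitz case, once the solutions are decomposed into their diffuse and jump parts as in Theorem \ref{main thm}) with the density of Lipschitz functions in the closure of Lipschitz approximants of $\BV^\r$ functions, yielding uniqueness of the extension. The principal technical obstacle, in my opinion, is the variation estimate underlying the $\d_\BV$-continuity: while the sup-norm bound is an elementary consequence of monotonicity, bounding $\pV(y_1 - y_2)$ in terms of $\pV(u_1 - u_2)$ requires a careful comparison of the derivatives $y_1'$ and $y_2'$ given by \eqref{play ineq Lip}, and its extension to $\BV^\r$ inputs with jumps must be reconciled with the explicit jump behaviour $y(t) = \Proj_{u(t)-\Z}(y(t-))$ prescribed at the discontinuity points of $u$.
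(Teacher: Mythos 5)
Your treatment of the core existence/uniqueness part is exactly the paper's: apply Theorem \ref{T:sweeping process existence} to $\C(t)=u(t)-\Z$ (noting $\d_\hausd(u(s)-\Z,u(t)-\Z)=\norm{u(s)-u(t)}{}$) and convert the differential inclusion into the integral inequality \eqref{play ineq} via Lemma \ref{L:int. charact.}. Beyond that, however, the paper does not prove the remaining assertions at all: rate independence and \eqref{play ineq Lip} are quoted from \cite[Section 3]{Mor77} and \cite[Proposition 3.9]{Kre97}, the $\d_\BV$- and $\d_s$-continuity of the restriction to $\Lip(\clint{a,b};\H)$ from \cite[Theorem 3.12]{Kre97} and \cite[Theorem 5.5]{Rec11}, and the $\BV$-norm continuity on $\BV^\r(\clint{a,b};\H)$ from \cite[Theorem 3.3]{KopRec16}. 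So the statement is a survey of known results, and your sketch is attempting to reprove literature that the paper deliberately cites.

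Measured against that, two of your steps have genuine gaps. First, for the $\d_\BV$-continuity on Lipschitz inputs the sup-norm bound via monotonicity is indeed elementary, but the variation bound is the whole difficulty: ``a further differencing at the level of $y_1'-y_2'$'' is not an argument, because \eqref{play ineq Lip} is a variational inequality and gives no pointwise comparison of the two derivatives (the normal cone is multivalued), and the actual proof in \cite[Theorem 3.12, p. 34]{Kre97} is a genuine continuity argument, not a Gronwall-type estimate of $\pV(y_1-y_2)$ by $\pV(u_1-u_2)$. Second, your final paragraph cannot work as written: $\Lip(\clint{a,b};\H)$ is not $\d_\BV$-dense in $\BV^\r(\clint{a,b};\H)$, since if $u$ has a jump of size $s>0$ at $t_0$ then $\d_\BV(u,v)\ge\pV(u-v,\clint{a,b})\ge s$ for every continuous $v$; hence no density argument can construct the extension, identify it with the sweeping-process solution, or give its uniqueness. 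The $\d_\BV$-continuity of $\P$ on all of $\BV^\r$ is a substantive theorem (the main result of \cite{KopRec16}) and does not ``persist by an argument parallel to the Lipschitz case'' after separating diffuse and jump parts. Relatedly, your claim that strict convergence of Lipschitz inputs forces uniform convergence of the arc-length reparametrizations is precisely the technical core of \cite[Theorem 5.5]{Rec11} and needs proof rather than assertion. The parts you do carry out correctly (integral formulation, rate independence by reparametrization of the inequality, Lipschitz regularity and \eqref{play ineq Lip} by differentiation of \eqref{play ineq}) are consistent with the paper's intent.
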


Let us observe that the integral formulation \eqref{play constr}-\eqref{play i.c.} is a direct consequence of Theorem 
\ref{T:sweeping process existence} and Lemma \ref{L:int. charact.}. Formulas \eqref{rate ind} and 
\eqref{play ineq Lip} are well-known (see, e.g., \cite[Section 3i, Section 3c]{Mor77} and \cite[Proposition 3.9, p. 33]{Kre97}). The continuity of the restriction of $\P$ w.r.t. $\d_\BV$ is proved in \cite[Theorem 3.12, p. 34]{Kre97}, while its continuity w.r.t $\d_s(v,w)$ is proved in \cite[Theorem 5.5]{Rec11}. The $\BV$-norm continuity of $\P$ on 
$\BV^\r(\clint{a,b};\H) \times \Z$ is proved in \cite[Theorem 3.3]{KopRec16}.

The operator $\P : \BV^\r(\clint{a,b};\H) \times \Z \function \BV^\r(\clint{a,b};\H)$ in general is not continuous w.r.t. to the strict metric $\d_s$ (cf. \cite[Thereom 3.7]{Rec11}) but its restriction 
$\P : \Lip(\clint{a,b};\H) \times \Z \function \Lip(\clint{a,b};\H)$ can be continuously extended to another operator if 
$\BV^\r(\clint{a,b};\H)$ is endowed with the strict topology in the domain, and with the $\L^1$-topology in the codomain. Let us recall the precise result proved in \cite[Thereom 3.7]{Rec11}.

\begin{Thm}\label{T:Pbar}
The operator $\P : \Lip(\clint{a,b};\H) \times \Z \function \Lip(\clint{a,b};\H)$ admits a unique continuous extension 
$\overline{\P} : \BV^\r(\clint{a,b};\H) \times \Z \function \BV^\r(\clint{a,b};\H)$ if $\BV^\r(\clint{a,b};\H)$ is endowed with the strict topology $\d_s$ in the domain, and with the $\L^1(\leb^1;\H)$-topology in the codomain. We have 
\begin{equation}\label{Pbar}
  \overline{\P}(u, z_0) = \P(\utilde,z_0) \circ \ell_u \qquad \forall u \in \BV^\r(\clint{a,b};\H),
\end{equation}
where $\utilde \in \Lip(\clint{a,b};\H)$ is the arc length reparametrization introduced in Proposition \ref{D:reparametrization}. In general $\overline{\P} \neq \P$.
\end{Thm}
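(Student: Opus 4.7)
The plan is to take \eqref{Pbar} as the \emph{definition} of $\overline{\P}$---namely $\overline{\P}(u,z_0) := \P(\utilde, z_0) \circ \ell_u$ for $u \in \BV^\r(\clint{a,b};\H)$---and then verify (a) agreement with $\P$ on $\Lip(\clint{a,b};\H)$, (b) continuity in the stated topologies, and (c) uniqueness of the extension. The final non-equality assertion $\overline{\P}\ne\P$ can then be handled by an explicit one-jump example. For (a), Proposition \ref{D:reparametrization} gives $u = \utilde \circ \ell_u$; when $u$ is Lipschitz the variation function is continuous, so $\ell_u$ is continuous, nondecreasing and surjective onto $\clint{a,b}$, and rate independence \eqref{rate ind} yields $\P(u, z_0) = \P(\utilde \circ \ell_u, z_0) = \P(\utilde, z_0) \circ \ell_u = \overline{\P}(u, z_0)$. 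Since $\P(\utilde, z_0) \in \Lip(\clint{a,b};\H)$ and $\ell_u \in \BV^\r(\clint{a,b};\ar)$, the composition lies in $\BV^\r(\clint{a,b};\H)$ for every $u \in \BV^\r(\clint{a,b};\H)$.

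Step (b) is the heart of the argument. Assume $u_n \to u$ in $\d_s$, i.e.\ $u_n \to u$ in $\L^1(\leb^1;\H)$ and $\pV(u_n,\clint{a,b}) \to \pV(u,\clint{a,b})$. The crucial claim is that $\utilde_n \to \utilde$ uniformly. The Lipschitz constants $\Lipcost(\utilde_n) \le \pV(u_n,\clint{a,b})/(b-a)$ are uniformly bounded, so by Ascoli--Arzel\`a every subsequence has a sub-subsequence converging uniformly to some Lipschitz $\widetilde{w}$. A standard lower-semicontinuity argument for the variation---applied on both $\clint{a,t}$ and $\clint{t,b}$ together with additivity and $\pV(u_n,\clint{a,b}) \to \pV(u,\clint{a,b})$---forces $\pV(u_n,\clint{a,t}) \to \pV(u,\clint{a,t})$ at every continuity point of $u$, hence $\ell_{u_n} \to \ell_u$ pointwise off an at most countable set. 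Passing to the limit in $u_n = \utilde_n \circ \ell_{u_n}$, using uniform convergence of $\utilde_n$ and pointwise convergence of $\ell_{u_n}$, identifies $\widetilde{w} \circ \ell_u = u$, and the uniqueness part of Proposition \ref{D:reparametrization} then forces $\widetilde{w} = \utilde$. Hence the full sequence $\utilde_n$ converges uniformly. The well-known contraction property of $\P$ on Lipschitz inputs (i.e.\ Lipschitz continuity in the uniform topology) then yields $\P(\utilde_n, z_0) \to \P(\utilde, z_0)$ uniformly; combined with $\ell_{u_n} \to \ell_u$ in $\L^1$ (by dominated convergence from the pointwise convergence) and the Lipschitz character of the limit $\P(\utilde, z_0)$, this gives $\overline{\P}(u_n, z_0) = \P(\utilde_n, z_0) \circ \ell_{u_n} \longrightarrow \P(\utilde, z_0) \circ \ell_u = \overline{\P}(u, z_0)$ in $\L^1(\leb^1;\H)$.

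For (c), uniqueness of the continuous extension reduces to density of $\Lip(\clint{a,b};\H)$ in $\BV^\r(\clint{a,b};\H)$ with respect to $\d_s$. I would approximate $u$ by piecewise-affine interpolants on grids of mesh tending to zero chosen through continuity points of $u$: such interpolants converge to $u$ in $\L^1$, and their variations are Riemann sums that converge up to $\pV(u,\clint{a,b})$. Finally, to establish $\overline{\P} \ne \P$ in general I would take $u$ with a single jump at $t_0$ from $u_0$ to $u_1$: then $\P(u, z_0)(t_0)$ is obtained by a single projection onto $u_1 - \Z$, whereas $\overline{\P}(u, z_0)(t_0)$ is the value of the play along the entire straight segment joining $u_0$ to $u_1$, and these two values disagree as soon as $\partial\Z$ curves transversally to this segment. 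The principal obstacle is the uniform convergence $\utilde_n \to \utilde$ under strict convergence, since it is precisely the point where the $\L^1$ and variation parts of $\d_s$ interact with the reparametrization; once this is in hand, everything else follows from rate independence, the contraction property of $\P$, and elementary density.
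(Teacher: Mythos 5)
Note first that the paper does not actually prove Theorem \ref{T:Pbar}: it is recalled verbatim from \cite[Theorem 3.7]{Rec11}, so there is no internal proof to compare with. Your overall strategy (take \eqref{Pbar} as the definition of $\Pbar$, check agreement with $\P$ on Lipschitz inputs via rate independence \eqref{rate ind}, prove $\d_s$-to-$\L^1$ continuity through the reparametrization, and get uniqueness from density of $\Lip(\clint{a,b};\H)$ in $(\BV^\r(\clint{a,b};\H),\d_s)$) is indeed the route of that reference, and steps (a), (c) and the final one-jump counterexample sketch are essentially fine (modulo the degenerate case $\pV(u,\clint{a,b})=0$, where $\ell_u$ is not surjective and \eqref{rate ind} cannot be invoked, which must be treated separately, trivially).

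However, the crux step (b) has genuine gaps as written. First, $\H$ is an arbitrary real Hilbert space, so Ascoli--Arzel\`a does not apply to the equi-Lipschitz family $(\utilde_n)$: you have no pointwise relative compactness of $\{\utilde_n(s)\}_n$, and establishing it (e.g.\ by showing that under strict convergence the values of $u_n$, hence of $\utilde_n$, approach the compact image of $\utilde$) is essentially the same difficulty you are trying to bypass. A working alternative is to prove pointwise convergence directly: $\utilde_n(\ell_{u_n}(t))=u_n(t)\to u(t)$ a.e.\ together with $\ell_{u_n}(t)\to\ell_u(t)$ at continuity points gives $\utilde_n\to\utilde$ on the closure of the range of $\ell_u$, while on each gap $\opint{\ell_u(t-),\ell_u(t)}$ the bound $\Lipcost(\utilde_n)\le\pV(u_n,\clint{a,b})/(b-a)$ forces $\utilde_n$ to be uniformly close to the affine segment (equality in the triangle inequality, using strict convexity of the Hilbert norm); equicontinuity then upgrades this to uniform convergence. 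Second, even granting a uniform limit $\widetilde w$, the identity $\widetilde w\circ\ell_u=u$ does \emph{not} identify $\widetilde w$ with $\utilde$: the uniqueness clause of Proposition \ref{D:reparametrization} also requires \eqref{reparametrization2}, i.e.\ affinity on the gap intervals, which you must verify for $\widetilde w$ (it does follow, but only by the same full-speed/geodesic argument just described, not by citing uniqueness alone). Third, the ``well-known contraction property of $\P$ \dots Lipschitz continuity in the uniform topology'' is not available for the vector play with a general $\Z\in\Conv_\H$: sup-norm Lipschitz continuity fails in general, and only a H\"older-type estimate involving the variations of the outputs, or the continuity results quoted in Subsection \ref{SS:play}, can be used. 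What suffices here is to note that $\utilde_n\to\utilde$ uniformly with $\Lipcost(\utilde_n)$ bounded implies $\utilde_n\to\utilde$ in $\d_s$ (lower semicontinuity of the variation plus $\limsup_n\pV(\utilde_n,\clint{a,b})\le\lim_n\pV(u_n,\clint{a,b})=\pV(u,\clint{a,b})=\pV(\utilde,\clint{a,b})$), and then invoke the $\d_s$-continuity of $\P$ on Lipschitz inputs (\cite[Theorem 5.5]{Rec11}, restated in the paper), together with the bound $\Lipcost(\P(\utilde_n,z_0))\le\Lipcost(\utilde_n)$, to obtain uniform convergence of the outputs before composing with $\ell_{u_n}$. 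With these three repairs your argument goes through; as it stands, the compactness step would fail in infinite dimensions and the limit identification and output-convergence steps are unjustified.
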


The meaning of the extension $\overline{\P}$ and of formula \eqref{Pbar} is clear: we reparametrize by the arc length the function $u = \utilde \circ \ell_u$ and we apply the play operator to the Lipschitz reparametrization $\utilde$, which is a segment on the jump sets $\clint{\ell_u(t-), \ell_u(t+)}$.  Then we ``throw away'' the jump sets from $\P(\utilde,z_0)$ by reinserting $\ell_u$ and we obtain $\overline{\P}(u,z_0) = \P(\utilde,z_0) \circ \ell_u$. A further motivation to this procedure is the rate independence of $\P$ (but observe that $\ell_u$ is not continuous) and we could also say that we are filling in the jumps of $u$ with a segment traversed with ``infinite velocity''. The continuity property of $\overline{\P}$ in Theorem \ref{T:Pbar} confirms this interpretation. Now we are going to show that $\overline{\P}(u,z_0)$ can also be obtained as the solution of a sweeping processes with a suitable prescribed behaviour on jumps.

\begin{Thm}
Assume that $\Z \in \Conv_\H$, $u \in \BV^\r(\clint{a,b};\H)$ and $z_0 \in \Z$. 
For every $x, y  \in \H$ let $\seg_{x,y} : \clint{0,1} \function \H$ be defined by $\seg_{x,y}(t) := (1-t)x + ty$, 
$t \in \clint{0,1}$. Then there exists a unique $y \in \BV^\r(\clint{a,b};\H)$ such that
\begin{align}
  & y(t) \in u(t) - \Z \qquad \forall t \in \clint{a,b} \label{Pbar constr g} \\
  & \int_{\cont(u)} \duality{z(t) - u(t) + y(t)}{\de\D y(t)} \le 0 \quad 
                       \text{for every $\mu$-measurable $z :\clint{a,b} \function \Z$}, 
      \label{P ineq g} \\
  & y(t) = \P(\seg_{u(t-),u(t)},u(t-) - y(t-))(1) \qquad \forall t \in \discont(\C), \label{P g cond.} \\
  & y(0) = y_0. \label{Pbar i.c. g}
\end{align}
Moreover if $u \in \BV^\r(\clint{a,b};\H)$, $z_0 \in \Z$, $s, t \in \clint{a,b}$, $s < t$, then we have
\begin{equation}\label{y-g = Pbar}
  y = \P(\utilde,z_0) \circ \ell_u = \overline{\P}(u,z_0),
\end{equation}
and
\begin{equation}\label{BV est for Pbar}
  \pV(\Pbar(u),\clint{s,t}) \le \pV(u,\clint{s,t}).
\end{equation}
\end{Thm}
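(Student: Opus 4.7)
The plan is to invoke Theorem \ref{main thm} with the moving convex set $\C(t):=u(t)-\Z$ and, on $S:=\discont(u)$, the jump maps
\[
  g_t:\C(t-)\to \C(t), \qquad g_t(x):=\P\bigl(\seg_{u(t-),u(t)},\,u(t-)-x\bigr)(1),
\]
and then to identify the resulting $y$ with $\overline{\P}(u,z_0)=\P(\utilde,z_0)\circ\ell_u$ through the uniqueness part of Theorem \ref{main thm}.

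First I would verify the hypotheses on $\{g_t\}$. The definition is meaningful because $u(t-)-x\in\Z$ whenever $x\in u(t-)-\Z$, and the play constraint yields $g_t(x)\in u(t)-\Z=\C(t)$. The bound $\Lipcost(g_t)\le 1$ is the $L^\infty$-contraction of the play operator with respect to the initial memory, a direct specialization of \eqref{moreau cont. dep.} to two sweeping problems with the same driving set. The summability \eqref{sum gt-mr} follows from \eqref{estimate for var(y)-classical} applied to each segment, which gives $\norm{g_t-\Id}{\infty,\C(t-)}\le \norm{u(t)-u(t-)}{}$ and hence $\sum_{t\in S}\norm{g_t-\Id}{\infty,\C(t-)}\le \pV(u,\clint{a,b})<\infty$. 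Theorem \ref{main thm} then produces a unique $y\in\BV^\r(\clint{a,b};\H)$, a measure $\mu$ and $v\in \L^1(\mu;\H)$ satisfying the constraint, $\D y=v\mu$, $-v\in N_\C(y)$ on $\clint{a,b}\setmeno S$, the jump law $y(t)=g_t(y(t-))$, and the initial condition. Since $\cont(u)=\clint{a,b}\setmeno S$, Lemma \ref{L:int. charact.} with $B=\cont(u)$ converts the normal-cone inclusion into the integral inequality \eqref{P ineq g}; the converse direction of the same lemma supplies the uniqueness of any solution to \eqref{Pbar constr g}--\eqref{Pbar i.c. g}.

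Next I would show that $Y:=\P(\utilde,z_0)\circ\ell_u$ satisfies the same system, so that uniqueness forces $y=Y=\overline{\P}(u,z_0)$. Clearly $Y(t)\in \utilde(\ell_u(t))-\Z=u(t)-\Z$ and $Y$ has the correct value at $a$. At a jump point $t\in S$, \eqref{reparametrization2} gives that $\utilde$ is the affine segment from $u(t-)$ to $u(t)$ on $\clint{\ell_u(t-),\ell_u(t)}$; combining the restart property of the play operator (an immediate consequence of uniqueness) with the rate independence \eqref{rate ind} applied to an affine surjective reparametrization of $[0,1]$ onto $\clint{\ell_u(t-),\ell_u(t)}$ yields
\[
   Y(t)=\P\bigl(\seg_{u(t-),u(t)},\,u(t-)-Y(t-)\bigr)(1)=g_t(Y(t-)).
\]
On $\cont(u)$ the map $\ell_u$ is continuous, and the Lipschitz differential inclusion $-(\P(\utilde,z_0))'\in N_{\utilde-\Z}(\P(\utilde,z_0))$ transfers, via the change of variables $s=\ell_u(t)$, into the measure-theoretic inclusion for $Y$ against the pushforward of $\leb^1$ through $\ell_u$. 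This is the step I expect to be the main obstacle; however, since $S$ is exactly the atomic set of $\D\ell_u$ and is handled separately by the jump condition, it reduces to a routine Radon--Nikodym computation of the density of $\D Y$ with respect to the continuous part of $\D\ell_u$.

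Finally, the variation estimate \eqref{BV est for Pbar} follows directly from the identification $y=\P(\utilde,z_0)\circ\ell_u$: for any partition $s=t_0<\cdots<t_m=t$, monotonicity of $\ell_u$ gives
\[
   \sum_{i=1}^m \norm{y(t_i)-y(t_{i-1})}{}\le \pV\bigl(\P(\utilde,z_0),\clint{\ell_u(s),\ell_u(t)}\bigr)\le \pV\bigl(\utilde,\clint{\ell_u(s),\ell_u(t)}\bigr)=\pV(u,\clint{s,t}),
\]
where the middle inequality is \eqref{estimate for var(y)-classical} applied to the sweeping process driven by $\utilde-\Z$, and the final equality is the defining property of the arc-length reparametrization from Proposition \ref{D:reparametrization}.
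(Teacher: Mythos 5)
Your proposal is correct and follows essentially the same route as the paper: the same choice of $\C(t)=u(t)-\Z$, $S=\discont(u)$, $g_t(x)=\P(\seg_{u(t-),u(t)},u(t-)-x)(1)$, the same verification of $\Lipcost(g_t)\le 1$ via \eqref{moreau cont. dep.} and of the summability via \eqref{estimate for var(y)-classical}, and the same identification $y=\P(\utilde,z_0)\circ\ell_u$ obtained by checking that the latter solves the system and invoking uniqueness; the change-of-variables step you single out as the main obstacle is exactly what the paper settles by quoting the chain rule and the image-measure lemma of \cite{Rec11} (Theorem A.7 and Lemma A.5), where the relevant comparison is between the pushforward of $\D\ell_u$ restricted to $\cont(\ell_u)$ and $\leb^1$, rather than the pushforward of $\leb^1$ as you phrase it. The only other (cosmetic) difference is that you deduce \eqref{BV est for Pbar} directly from the identification with $\Pbar$, while the paper obtains it from \eqref{estimate for Var(y)_S gen} combined with the bound $\norm{g_t-\Id}{\infty,\C(t-)}\le\norm{u(t)-u(t-)}{}$.
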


\begin{proof}
We set $\C(t) := u(t) - \Z$, $y_0 := u(0) - z_0$, $S := \discont(u)$, and 
\[
  g_t(x) := \P(\seg_{u(t-),u(t)}, u(t-) -x)(1), \qquad t \in S.
\] 

If $t \in \discont(u)$ and $x \in \C(t-)$ then $g_t(x)$ is the solution of the sweeping process driven by 
$\K_u(\tau) := (1-\tau)u(t-) - \tau u(t) - \Z$, $\tau \in \clint{0,1}$, and having $x$ as initial condition, thus from
\eqref{estimate for var(y)-classical} we infer that
$\pV(\P(\seg_{u(t-),u(t)}, u(t-) - x), \clint{0,1}) \le \pV(\K_u, \clint{0,1}) = \norm{u(t) - u(t-)}{}$ and we have
\begin{align}
  \sum_{t \in \discont(u)}\norm{g_{t} - \Id}{\infty,\C(t-)}
  & = \sum_{t \in \discont(u)} \sup_{x \in \C(t-)} \norm{g_{t}(x) - x}{} \notag \\
    & = \sum_{t \in \discont(u)} \sup_{x \in \C(t-)}\norm{\P(\seg_{u(t-),u(t)}, u(t-) - x)(1) - x}{} \notag \\
    & \le \sum_{t \in \discont(u)} \sup_{x \in \C(t-)} \pV(\P(\seg_{u(t-),u(t)}, u(t-) - x), \clint{0,1}) \notag \\
    & \le \sum_{t \in \discont(u)} \norm{u(t) - u(t-)}{} \le \pV(u, \clint{0,T}) \notag
\end{align}
which together with \eqref{estimate for Var(y)_S gen} implies \eqref{BV est for Pbar}.
Moreover thanks to \eqref{moreau cont. dep.} we have that $\norm{g_{t}(x_1) - g_{t}(x_2)}{} \le \norm{x_1 - x_2}{}$ therefore we can apply Theorem \ref{main thm} and infer the existence of a unique $y$ satisfying 
\eqref{Pbar constr g}--\eqref{Pbar i.c. g}. Now we show that \eqref{y-g = Pbar} holds. Thanks to the chain rule 
\cite[Theorem A.7]{Rec11} we have that $\D\Pbar(u,z_0) = w \D\ell_u$ with 
\[
  w(t) =
  \begin{cases}
    (\P(\utilde),z_0)'(t) & \text{if $t \in \cont(u)$} \\
    \ \\
    \dfrac{\P(\utilde,z_0)(\ell_u(t)) - \P(\utilde,z_0)(\ell_u(t-))}{\ell_u(t) - \ell_u(t-))} & \text{if $t \in \discont(u)$}
  \end{cases}
\]
and if we set
\begin{equation}\label{Z}
  N := \{\sigma \in \clint{a,b}\ :\ \duality{z - \utilde(\sigma) + \P(\utilde,z_0)(\sigma)}{(\P(\utilde,z_0))'(\sigma)} > 0 
      \text{ for some } z \in \Z\} \notag
\end{equation} 
from \eqref{play ineq Lip} we deduce that $\leb^1(N) = 0$,
hence, thanks to \cite[Lemma A.5]{Rec11}, we have that
\begin{align}
  & \D\ell_{\C}(\{t \in \cont(\ell_{\C})\ :\ 
                \duality{z - \utilde(\ell_u(t)) + \P(\utilde,z_0)(\ell_u(t))}{(\P(\utilde),z_0)'(\ell_u(t))} > 0
                \text{ for some } z \in \Z\}) \notag \\
  = & \D\ell_{\C}(\{t \in \cont(\ell_{\C})\ :\ \ell_{\C}(t) \in N\}) = \leb^{1}(N) = 0. \notag
\end{align}
This implies that if $z : \clint{a,b} \function \H$ is $\mu$-measurable and $z(t) \in \Z$ for every $t \in \clint{a,b}$, then \begin{align}
  & \int_{\cont(u)} \duality{z(t) - u(t) + \P(\utilde,z_0)(\ell_u(t))}{\de \D\!\ (\P(\utilde,z_0) \circ \ell_u)}  \notag \\
     & = \int_{\cont(u)} \duality{z(t) - u(t) + \P(\utilde,z_0)(\ell_u(t))}{(\P(\utilde,z_0)'(\ell_u(t))} \de \D\ell_\C(t) \le 0 \notag
\end{align}
and \eqref{P ineq g} is satisfied. Finally \eqref{P g cond.} is trivially satisfied by $\Pbar(u,z_0)$.
\end{proof}



\end{document}